\newtheorem{theorem}{Theorem}
\newtheorem{lemma}[theorem]{Lemma}
\newtheorem{corollary}[theorem]{Corollary}
\newtheorem{construction}[theorem]{Construction}
\newtheorem{remark}{Remark}
\long\def\delete#1{}
\definecolor{Thistle}{rgb}{0.847,0.749,0.847}
\definecolor{Khaki}{rgb}{0.941,0.902,0.549}
\definecolor{Orchid}{rgb}{0.855,0.439,0.839}
\definecolor{MediumOrchid}{rgb}{0.729,0.333,0.827}
\definecolor{brown}{rgb}{0.8,0.5,0}
\definecolor{LightBrown}{rgb}{0.8,0.2,0.4}
\definecolor{DarkGray}{rgb}{0.78,0.78,0.78}
\definecolor{DarkMidGray}{rgb}{0.81,0.81,0.81}
\definecolor{MidGray}{rgb}{0.85,0.85,0.85}
\definecolor{LightGray}{rgb}{0.88,0.88,0.88}
\definecolor{VeryLightGray}{rgb}{0.96,0.96,0.96}
\definecolor{GrayA}{rgb}{0.7,0.7,0.7}
\definecolor{GrayB}{rgb}{0.78,0.78,0.78}
\definecolor{GrayC}{rgb}{0.80,0.80,0.80}
\definecolor{GrayD}{rgb}{0.82,0.82,0.82}
\definecolor{GrayE}{rgb}{0.84,0.84,0.84}
\definecolor{GrayF}{rgb}{0.86,0.86,0.86}
\definecolor{GrayG}{rgb}{0.88,0.88,0.88}
\definecolor{GrayH}{rgb}{0.90,0.90,0.90}
\definecolor{GrayI}{rgb}{0.92,0.92,0.92}
\definecolor{GrayJ}{rgb}{0.94,0.94,0.94}
\definecolor{VeryLightBlue}{rgb}{0.9,0.9,1}
\definecolor{LightBlue}{rgb}{0.8,0.8,1}
\definecolor{MidBlue}{rgb}{0.5,0.5,1}
\definecolor{DarkBlue}{rgb}{0,0,0.6}
\definecolor{Gold}{rgb}{1,0.843,0}
\definecolor{LightGreen}{rgb}{0.88,1,0.88}
\definecolor{MidGreen}{rgb}{0.6,1,0.6}
\definecolor{DarkGreen}{rgb}{0,0.6,0}
\definecolor{VeryLightYellow}{rgb}{1,1,0.9}
\definecolor{LightYellow}{rgb}{1,1,0.6}
\definecolor{MidYellow}{rgb}{1,1,0.5}
\definecolor{DarkYellow}{rgb}{1,1,0.2}
\definecolor{VeryLightRed}{rgb}{1,0.9,0.9}
\definecolor{LightRed}{rgb}{1,0.8,0.8}
\definecolor{MidRed}{rgb}{1,0.55,0.55}
\def\choose#1#2{\left (\!\!\begin{array}{c}#1\\#2\end{array}\!\!\right )}
\def\qed{\hfill$\Box$\vspace{12pt}}
\def\BB{{\cal B}}
\def\DD{{\cal D}}
\def\LL{{\cal L}}
\def\PP{{\cal P}}
\def\I{{\rm I}}
\def\bfL{{\bf L}}
\def\b0{{\bf 0}}
\def\De{\Delta}
\def\Ga{\Gamma}
\def\Si{\Sigma}
\def\Om{\Omega}
\def\a{\alpha}
\def\b{\beta}
\def\g{\gamma}
\def\l{\lambda}
\def\s{\sigma}
\def\t{\tau}
\def\ve{\varepsilon}
\def\Aut{{\rm Aut}}
\def\PSL{{\rm PSL}}
\def\PGL{{\rm PGL}}
\def\AGL{{\rm AGL}}
\def\PGL{{\rm PGL}}
\def\AG{{\rm AG}}
\def\PG{{\rm PG}}
\def\GF{{\rm GF}}
\def\AGammaL{{\rm A\Gamma L}}
\def\PGammaL{{\rm P\Gamma L}}
\def\Sp{{\rm Sp}}
\def\Arc{{\rm Arc}}
\def\val{{\rm val}}
\newcommand{\pmat}[1]{\begin{pmatrix}#1\end{pmatrix}}
\title{Symmetric graphs with 2-arc transitive quotients}
\author{Guangjun Xu and Sanming Zhou\\ \\
{\small
Department of Mathematics and Statistics}\\
{\small The University of Melbourne}\\
{\small Parkville, VIC 3010, Australia}\\
{\small E-mail: {\it \{gx, smzhou\}@ms.unimelb.edu.au}}}
\date{}
\begin{document}

\openup 0.5\jot\maketitle

\vspace{-1cm}

\smallskip
\begin{abstract}
A graph $\Ga$ is $G$-symmetric if $\Ga$ admits $G$ as a group of automorphisms acting transitively on the set of vertices and the set of arcs of $\Ga$, where an arc is an ordered pair of adjacent vertices. In the case when $G$ is imprimitive on $V(\Ga)$, namely when $V(\Ga)$ admits a nontrivial $G$-invariant partition $\BB$, the quotient graph $\Ga_{\BB}$ of $\Ga$ with respect to $\BB$ is always $G$-symmetric and sometimes even $(G, 2)$-arc transitive. (A $G$-symmetric graph is $(G, 2)$-arc transitive if $G$ is transitive on the set of oriented paths of length two.) In this paper we obtain necessary conditions for $\Ga_{\BB}$ to be $(G, 2)$-arc transitive (regardless of whether $\Ga$ is $(G, 2)$-arc transitive) in the case when $v-k$ is an odd prime $p$, where $v$ is the block size of $\BB$ and $k$ is the number of vertices in a block having neighbours in a fixed adjacent block. These conditions are given in terms of $v, k$ and two other parameters with respect to $(\Ga, \BB)$ together with a certain 2-point transitive block design induced by $(\Ga, \BB)$. We prove further that if $p=3$ or $5$ then these necessary conditions are essentially sufficient for $\Ga_{\BB}$ to be $(G, 2)$-arc transitive.
 
{\it Key words:}~Symmetric graph; arc-transitive graph; 2-arc transitive graph; 3-arc graph; transitive block design

{\it AMS subject classification (2010):} 05C25, 05E18  
\end{abstract}

\section{Introduction}
A graph $\Ga=(V(\Ga),E(\Ga))$ is {\em $G$-symmetric} if $\Ga$ admits $G$ as a group of automorphisms such that $G$ is transitive on $V(\Ga)$ and on the set of arcs of $\Ga$, where an {\em arc} is an ordered pair of adjacent vertices. If in addition $\Ga$ admits a {\em nontrivial $G$-invariant partition}, that is, a partition $\BB$ of $V(\Ga)$ such that $1< |B| < |V(\Ga)|$ and $B^g := \{\a^g: \a \in B\} \in \BB$ for any $B \in \BB$ and $g \in G$ (where $\a^g$ is the image of $\a$ under $g$), then $\Ga$ is called an {\em imprimitive $G$-symmetric graph}. In this case the {\em quotient graph} $\Ga_{\BB}$ of $\Ga$ with respect to $\BB$ is defined to have vertex set $\BB$ such that $B, C \in \BB$ are adjacent if and only if there exists at least one edge of $\Ga$ between $B$ and $C$. It is readily seen that $\Ga_{\BB}$ is $G$-symmetric under the induced action of $G$ on $\BB$. We assume that $\Ga_{\BB}$ contains at least one edge, so that each block of $\BB$ is an independent set of $\Ga$. Denote by $\Ga(\alpha)$ the neighbourhood of $\alpha \in V(\Ga)$ in $\Ga$, and define $\Ga(B)=\cup_{\alpha \in B}\Ga(\alpha)$ for $B \in \BB$. For blocks $B, C \in \BB$ adjacent in $\Ga_{\BB}$, let $\Ga[B,C]$ be the bipartite subgraph of $\Ga$ induced by $(B \cap \Ga(C)) \cup (C \cap \Ga(B))$. Since $\Ga_{\BB}$ is $G$-symmetric, up to isomorphism $\Ga[B,C]$ is independent of the choice of $(B, C)$. Define $\Ga_{\BB}(\a) := \{C \in \BB: \Ga(\a) \cap C \ne \emptyset\}$ and $\Ga_{\BB}(B) := \{C \in \BB: \mbox{$B$ and $C$ are adjacent in $\Ga_{\BB}$}\}$, the latter being the neighborhood of $B$ in $\Ga_{\BB}$. 
Define 
$$
v:=|B|,\,\; k:=|B \cap \Ga(C)|,\,\; r := |\Ga_{\BB}(\a)|,\,\; b := \val(\Gamma_{\mathcal {B}})
$$ 
to be the block size of $\BB$, the size of each part of the bipartition of $\Ga[B,C]$, the number of blocks containing at least one neighbour of a given vertex, and the valency of $\Ga_{\BB}$, respectively. These parameters depend on $(\Ga, \BB)$ but are independent of $\a \in V(\Ga)$ and adjacent $B, C \in \BB$. 

In \cite{gp1} Gardiner and Praeger introduced a geometrical approach to imprimitive symmetric triples $(\Ga, G, \BB)$ which involves $\Ga_{\BB}$, $\Gamma[B, C]$ and an incidence structure $\DD(B)$ with point set $B$ and block set $\Gamma_{\mathcal {B}}(B)$. A `point' $\a \in B$ and a `block' $C \in \Gamma_{\mathcal {B}}(B)$ are incident in $\DD(B)$ if and only if $\a \in \Gamma(C)$; we call $(\a, C)$ a flag of $\DD(B)$ and write $\a \I C$. It is clear that $\mathcal{D}(B) = (B, \Gamma_{\mathcal {B}}(B), I)$ is a $1$-$(v, k, r)$ design \cite{gp1} with $b$ blocks which admits $G_B$ as a group of automorphisms acting transitively on its points, blocks and flags, where \emph{$G_B$} is the setwise stabilizer of $B$ in $G$. Note that $vr = bk$. Define $\overline{\DD}(B) := (B, \Ga_{\BB}(B), \overline{\I})$ to be the complementary structure \cite{lz} of $\DD(B)$ for which $\a \overline{\I} C$ if and only if $\a \not \in \Ga(C)$. Then $\overline{\DD}(B)$ is $1$-$(v, v-k, b-r)$ design with $b$ blocks. Up to isomorphism $\mathcal{D}(B)$ and $\overline{\DD}(B)$ are independent of $B$. The cardinality of $\{D \in \Ga_{\BB}(B): \Ga(D) \cap B = \Ga(C) \cap B\}$, denoted by $m$, is independent of the choice of adjacent $B, C \in \BB$ and is called the {\em multiplicity} of $\DD(B)$.
 
An {\em $s$-arc} of $\Ga$ is a sequence $(\a_0, \a_1, \ldots, \a_s)$ of $s+1$ vertices of $\Ga$ such that $\a_{i}, \a_{i+1}$ are adjacent for $i = 0, \ldots , s - 1$ and $\a_{i-1} \neq \a_{i+1}$ for $i = 1, \ldots, s - 1$. If $\Ga$ admits $G$ as a group of automorphisms such that $G$ is transitive on $V(\Ga)$ and on the set of $s$-arcs of $\Ga$, then $\Ga$ is called {\em $(G,s)$-arc transitive} \cite{b}. A $(G, 1)$-arc transitive graph is precisely a $G$-symmetric graph, and a $(G,s)$-arc transitive graph is $(G,s-1)$-arc transitive. 

This paper was motivated by the following questions asked in \cite{mpz}: When does a quotient of a symmetric graph admit a natural 2-arc transitive group action? If there is such a quotient, what information does this give us about the original graph? These questions were studied in \cite{mpz, jlw, lpz, lpz2, xz, zhou98, zhou2008}, with a focus on the case where $v-k \ge 1$ or $k \ge 1$ is small. In the present paper we consider the more general case where $k=v-p$ for a prime $p \ge 3$. In this case we obtain necessary conditions for $\Ga_{\BB}$ to be $(G, 2)$-arc transitive, regardless of whether $\Ga$ is $(G, 2)$-arc transitive. We prove further that when $p=3$ or $5$ such necessary conditions are essentially sufficient for $\Ga_{\BB}$ to be $(G, 2)$-arc transitive. 

A few definitions and notations are needed before stating our main result. Let $G$ and $H$ be groups acting on $\Om$ and $\Lambda$ respectively. The action of $G$ on $\Om$ is said to be {\em permutationally isomorphic} \cite[p.17]{Dixon-Mortimer} to the action of $H$ on $\Lambda$ if there exist a bijection $\rho: \Om \rightarrow \Lambda$ and a group isomorphism $\psi: G \rightarrow H$ such that $\rho(\a^g) = (\rho(\a))^{\psi(g)}$ for all $\a \in \Om$ and $g \in G$. In the case when $G=H$ and the actions of $G$ on $\Om$ and $\Lambda$ are permutationally isomorphic, we simply write $G^{\Om} \cong G^{\Lambda}$. 

Now we return to our discussion on imprimitive symmetric triples $(\Ga, G, \BB)$. 
Define $G_{(B)} = \{g \in G_B: \a^g = \a\;\mbox{for every $\a \in B$}\}$ to be the pointwise stabilizer of $B$ in $G$, and $G_{[B]} = \{g \in G_B: C^g = C\;\mbox{for every $C \in \Ga_{\BB}(B)$}\}$ the pointwise stabilizer of $\Ga_{\BB}(B)$ in $G_B$. As usual, by $G_B^B$ we mean the group $G_B/G_{(B)}$ with its action restricted to $B$, and by $G_B^{\Ga_{\BB}(B)}$ we mean $G_B/G_{[B]}$ with its action restricted to $\Ga_{\BB}(B)$. (Thus, whenever we write $G_B^B \cong G_B^{\Ga_{\BB}(B)}$, we mean that the actions of $G_B$ on $B$ and $\Ga_{\BB}(B)$ are permutationally isomorphic.) Define $G_{(\BB)} = \{g \in G: B^g = B\;\mbox{for every $B \in \BB$}\}$.  

Let $\Si$ be a graph and $\De$ a subset of the set of 3-arcs of $\Si$. We say that $\De$ is {\em self-paired} if $(\t, \s, \s', \t') \in \De$ implies $(\t', \s', \s, \t) \in \De$. In this case the {\em 3-arc graph} $\Xi(\Si, \De)$ is defined \cite{lpz} to have arcs of $\Si$ as its vertices such that two such arcs $(\s, \t), (\s', \t')$ are adjacent if and only if $(\t, \s, \s', \t') \in \De$. We denote by $n \cdot \Si$ the graph which is $n$ vertex-disjoint copies of $\Si$, and by $C_n$ the cycle of length $n$. We may view the complete graph $K_n$ on $n$ vertices as a degenerate design of block size two. 

As shown in \cite{lz}, when $\Ga_{\BB}$ is $(G, 2)$-arc transitive, the {\em dual design} $\DD^{*}(B)$ of $\DD(B)$ plays a significant role in the study of $\Ga$, where $\DD^{*}(B)$ is obtained from $\DD(B)$ by interchanging the roles of points and blocks but retaining the incidence relation. Since in this case $G_B$ is 2-transitive on $\Ga_{\BB}(B)$, as observed in \cite{lz},
\begin{equation}
\label{eq:l}
\l:= |\Ga(C) \cap \Ga(D) \cap B|
\end{equation}
is independent of the choice of distinct $C, D \in \Ga_{\BB}(B)$. Denote by $\overline{\DD^*}(B)$ the complementary incidence structure of $\DD^{*}(B)$, which is defined to have the same `point' set $\Ga_{\BB}(B)$ as $\DD^{*}(B)$ such that a `point' $C \in \Ga_{\BB}(B)$ is incident with a `block' $\a \in B$ if and only if $C \not \in \Ga_{\BB}(\a)$.  
As observed in \cite[Theorem 3.2]{lz}, if $\Ga_{\BB}$ is $(G, 2)$-arc transitive, then either $\l=0$ or $\DD^{*}(B)$ is a 2-$(b, r, \l)$ design with $v$ blocks, and either $\overline{\l} := v - 2k + \l = 0$ or $\overline{\DD^*}(B)$ is a 2-$(b, b-r, \overline{\l})$ design with $v$ blocks. Moreover, each of $\DD^{*}(B)$ and $\overline{\DD^*}(B)$ admits \cite{lz} $G_B$ as a group of automorphisms acting 2-transitively on its point set and transitively on its block set. The first main result in this paper, Theorem \ref{thm:para} below, gives the parameters of $\DD^{*}(B)$ and information about $\Ga$, $\DD^{*}(B)$ or/and the action of $G_B$ on $\Ga_{\BB}(B)$ in the case when $k=v-p$ for a prime $p \ge 3$. Our proof of this result relies on the classification of finite 2-transitive groups (see e.g. \cite{Dixon-Mortimer}) and that of 2-transitive symmetric designs \cite{k} (which in turn rely on the classification of finite simple groups). Without loss of generality we may assume that $\Ga_{\BB}$ is connected.

\begin{theorem} 
\label{thm:para}
Let $\Ga$ be a $G$-symmetric graph with $V(\Ga)$ admitting a nontrivial $G$-invariant partition $\BB$ such that $k=v-p \geq 1$ and $\Ga_{\BB}$ is connected with valency $b \ge 2$, where $p \ge 3$ is a prime and $G \le \Aut(\Ga)$. Suppose $\Ga_{\BB}$ is $(G, 2)$-arc transitive. Then one of (a)-(f) in Table \ref{tab0} occurs, and in (c)-(f) the parameters of the 2-$(b, r, \l)$ design $\DD^{*}(B)$ with $v$ blocks are given in the third column of the table. 

Moreover, in (a), $\Ga \cong (|V(\Ga)|/2) \cdot K_2$, $G_B^B \cong G_B^{\Ga_{\BB}(B)}$ is 2-transitive of degree $p+1$, and any connected $(p+1)$-valent $(G,2)$-arc transitive graph can occur as $\Ga_{\BB}$ in (a). 

In (b), $\Ga \cong n \cdot \Ga[B, C]$ where $n=|V(\Ga)|/2p$, $\Ga_{\BB} \cong C_n$, and $G/G_{(\BB)} = D_{2n}$. 

In (c), $G_B^B \cong G_B^{\Ga_{\BB}(B)}$ is isomorphic to a 2-transitive subgroup of $\PGammaL(n+1,q)$, and $G$ is faithful on $\BB$.

In (d), $G_B^B \cong G_B^{\Ga_{\BB}(B)} \cong \PSL(2,11)$.

In (e), $V(\Ga)$ admits a $G$-invariant partition $\PP$ with block size $p$ which is a refinement of $\BB$ such that $\Ga_{\PP} \cong \Xi(\Ga_{\BB}, \Delta)$ for a self-paired $G$-orbit $\Delta$ on the set of 3-arcs of $\Ga_{\BB}$. Moreover, $\hat \BB = \{\hat{B}: B \in \BB\}$ (where $\hat B$ is the set of blocks of $\PP$ contained in $B$) is a $G$-invariant partition of $\PP$ such that $(\Ga_{\PP})_{\hat \BB} \cong \Ga_{\BB}$ and the parameters with respect to $(\Ga_{\PP}, \hat \BB)$ are given by $v_{\hat \BB} = b_{\hat \BB} = a$ and $k_{\hat \BB} = r_{\hat \BB} = a-1$. 

In (f), if $s=1, 2$, then all possibilities are given in Tables \ref{tab1}-\ref{tab2} respectively, where $G_B^{\Ga_{\BB}(B)}$ is isomorphic to the group or a 2-transitive subgroup of the group in the first column (with natural actions).  
\end{theorem}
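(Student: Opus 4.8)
The plan is to recast the hypothesis as a statement about a $2$-transitive permutation group carrying an invariant design, and then to apply the classifications of $2$-transitive groups and of $2$-transitive symmetric designs. Write $\Omega := \Ga_{\BB}(B)$, so $|\Omega| = b$ and, since $\Ga_{\BB}$ is $(G,2)$-arc transitive, $G_B$ acts $2$-transitively on $\Omega$. By \cite{lz} we have the two incidence structures $\DD^{*}(B)$ and $\overline{\DD^{*}}(B)$ on $\Omega$, and the decisive observation is numerical: every point $C \in \Omega$ is incident with exactly $k = v-p$ blocks of $\DD^{*}(B)$ and with exactly $v-k = p$ blocks of $\overline{\DD^{*}}(B)$. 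Thus, whenever $\overline{\l} \neq 0$, the design $\overline{\DD^{*}}(B)$ is a $2$-$(b, b-r, \overline{\l})$ design whose replication number is the prime $p$. From the standard identity $p(b-r-1) = \overline{\l}(b-1)$ together with $1 \le \overline{\l} \le p$ I would first deduce $\overline{\l} \le p-1$ and hence $b \equiv 1 \pmod{p}$, while Fisher's inequality gives $v \ge b$, with equality exactly when $\overline{\DD^{*}}(B)$ (equivalently $\DD^{*}(B)$) is symmetric.

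Next I would clear away the degenerate configurations by testing whether $\l$ or $\overline{\l}$ vanishes. Because a block of $\DD^{*}(B)$ has size $r$, one has $\l = 0$ if and only if $r = 1$; then $vr = bk$ forces $(v-p) \mid p$, so $v-p \in \{1, p\}$, and inspecting $\Ga[B,C]$ shows that either $\Ga$ is a perfect matching with $b = p+1$ (case (a)) or $b = 2$ and $\Ga_{\BB}$ is a cycle (case (b)); the statements $G_B^B \cong G_B^{\Ga_{\BB}(B)}$, $G/G_{(\BB)} = D_{2n}$, and the realizability of every connected $(p+1)$-valent $(G,2)$-arc transitive graph then follow from the explicit descriptions. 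Dually $\overline{\l} = 0$ if and only if $b-r \le 1$; discarding the trivial $r = b$, the remaining case $r = b-1$ gives $v = bp$ and forces the $p$-subsets $S_C := B \setminus \Ga(C)$ ($C \in \Omega$) to partition $B$ into $b$ parts. This $G$-invariant partition assembles into a refinement $\PP$ of $\BB$ with block size $p$; a direct computation gives the parameters $v_{\hat{\BB}} = b_{\hat{\BB}} = a$ and $k_{\hat{\BB}} = r_{\hat{\BB}} = a-1$, so that $v_{\hat{\BB}} - k_{\hat{\BB}} = 1$, and the established theory of the $v-k=1$ case and of $3$-arc graphs \cite{lpz, mpz} yields $\Ga_{\PP} \cong \Xi(\Ga_{\BB}, \Delta)$ and $(\Ga_{\PP})_{\hat{\BB}} \cong \Ga_{\BB}$, giving case (e).

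The core of the argument is the remaining case $\l, \overline{\l} \ge 1$, in which both $\DD^{*}(B)$ and $\overline{\DD^{*}}(B)$ are genuine $2$-designs acted on $2$-transitively (on points) by $G_B$, forcing $b \ge 4$. If $v = b$ the design is symmetric of block size $p$, and I would invoke Kantor's classification of $2$-transitive symmetric designs \cite{k}: up to complementation the possibilities are the point--hyperplane design of a projective space, the $2$-$(11,5,2)$ biplane with automorphism group $\PSL(2,11)$, the Higman--Sims designs, and the symplectic designs over $\GF(2)$. The constraint that the block size be the prime $p$ eliminates the latter two families (whose block sizes, $50$ and $2^{m-1}(2^m \pm 1)$ for $m \ge 2$, are composite) and leaves precisely the projective case, yielding $G_B^{\Ga_{\BB}(B)} \le \PGammaL(n+1,q)$ (case (c)), and the $\PSL(2,11)$ case with $p=5$ (case (d)); since a symmetric design identifies the point and block actions of $G_B$, one obtains $G_B^B \cong G_B^{\Ga_{\BB}(B)}$ in both, and faithfulness of $G$ on $\BB$ in (c) follows from the projective $2$-transitive action having trivial kernel. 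If instead $v > b$ the design is non-symmetric, and I would run the CFSG-based classification of finite $2$-transitive groups: for each affine and almost simple candidate for $G_B^{\Ga_{\BB}(B)}$ of degree $b \equiv 1 \pmod p$ one tests whether it admits an invariant $2$-design with replication number $p$, and the surviving possibilities are exactly those recorded in case (f), arranged in Tables \ref{tab1}--\ref{tab2} according to the index $s$.

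I expect the principal obstacle to be the genuine-design case, and specifically the exhaustive compatibility check: one must show that for each $2$-transitive group the only invariant $2$-designs with prime replication number $p$ are those listed, discard the spurious integer solutions of $p(b-r-1) = \overline{\l}(b-1)$ and $vr = bk$, and descend Kantor's list correctly through the block-size-$p$ filter. A secondary delicate point lies in case (e): before quoting the $v-k=1$ and $3$-arc graph machinery one must verify that the partition $\PP$ furnished by the sets $S_C$ is genuinely $G$-invariant and a refinement of $\BB$, and that the induced structure on $(\Ga_{\PP}, \hat{\BB})$ carries exactly the asserted parameters, so that the cited theory applies verbatim.
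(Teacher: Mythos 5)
Your proposal is correct and follows essentially the same route as the paper: the identities $vr=bk$ and $\l(b-1)=k(r-1)$ plus Fisher's inequality to separate the cases $\l=0$, $\overline{\l}=0$, symmetric ($v=b$) and non-symmetric, then Kantor's classification of $2$-transitive symmetric designs filtered by the prime block size $p$ of $\overline{\DD^*}(B)$ for (c)--(d), and the classification of finite $2$-transitive groups filtered by the prime replication number $p$ (whence $b\equiv 1 \pmod p$) for (f); your organization by the vanishing of $\l,\overline{\l}$ and by symmetry of the design is equivalent to the paper's split according to whether $p \mid v$ and the value of $t=b/a$. The only substantive shortfall is that the case-(f) analysis --- deriving the exact constraints $a\mid ps+1$, $s\mid\frac{ps-a+1}{a}$, $\frac{a-1}{p-a}\le s\le a-1\le p-2$ of Table \ref{tab0} and the entries of Tables \ref{tab1}--\ref{tab2} for $s=1,2$ (degree comparisons, design non-existence results, uniqueness of the $2$-transitive actions of the Mathieu groups), which occupies most of the paper's proof --- is left as a correctly described plan rather than carried out.
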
  

\begin{table}
\begin{center}
  \begin{tabular}{l|l|l|l}
\hline 
Case & $\overline{\DD^*}(B)$  & $(v,b,r,\l)$ & Conditions \\  \hline
(a) & & $(p+1,p+1,1,0)$ & \\  \hline
(b) & & $(2p,2,1,0)$ & \\  \hline
     & & & $p = \frac{q^{n}-1}{q-1}, n \ge 2$ \\
(c) & $\PG_{n-1}(n, q)$ & $\left(\frac{q^{n+1}-1}{q-1}, \frac{q^{n+1}-1}{q-1}, q^n, q^n - q^{n-1}\right)$ & \mbox{$q$ a prime power} \\
     & & & $\frac{q^{n}-1}{q-1}$ is a prime \\  \hline
(d) & 2-$(11, 5, 2)$ & $(11, 11, 6, 3)$ & $p=5$ \\  \hline
(e) & & $(pa,a,a-1,p(a-2))$ & $a \ge 3$ \\  \hline
     & & & $a \ge 2, s \ge 1$ \\
     & & & $a$ a divisor of $ps+1$ \\
(f) & & $\left(pa, ps+1, \frac{(ps+1)(a-1)}{a}, p(a-2) + \frac{ps-a+1}{as}\right)$ & 
$s$ a divisor of $\frac{ps-a+1}{a}$ \\
     & & & $\frac{a-1}{p-a} \le s \le a-1 \le p-2$ \\  \hline
\end{tabular}
  \caption{Theorem \ref{thm:para}.}
\label{tab0}
  \end{center}
\end{table}

\begin{table}
\begin{center}
  \begin{tabular}{l|l|c|l}
\hline
$G_B^{\Ga_{\BB}(B)}$ & $\DD^*(B)$  & $(v,b,r,\l)$ & Conditions \\  \hline
$A_{p+1}$ & $\overline{\DD^*}(B) \cong K_{p+1}$ & & $a = \frac{p+1}{2}$ \\ \hline
   & & & $1 \le m \le n-1$ \\ 
   & & & $p=2^n-1$ \\
   & & & a Mersenne prime \\
$\le \AGL(n, 2)$ & &
$\pmat{2^m (2^n - 1)\\ 2^n\\ 2^n - 2^{n-m}\\ (2^m - 1)(2^n - 2^{n-m} - 1)}$ & 
                                    $r^* = (2^n - 1)(2^m - 1)$ \\ \hline
$\le \PGL(2, p)$ & & & $a-1$ a divisor of $p-1$ \\ \hline
$\Sp_{4}(2)$ & $2$-$(6, 3, 2)$ & & $p=5$ \\ \hline
  & & & $p=11$ \\ 
  & & & $\DD^*(B)$ is a Hadamard \\
$M_{11}$ & $2$-$(12,6,5)$ & & 3-subdesign of the \\
 & & & Witt design $W_{12}$ \\
 & & & ($3$-$(12, 6, 2)$ design) \\ \hline
\end{tabular}
  \caption{Possibilities when $s=1$ in case (f).}
\label{tab1}
  \end{center}
\end{table}

\begin{table}
\begin{center}
  \begin{tabular}{l|l|c|l}
\hline
$G_B^{\Ga_{\BB}(B)}$ & $\DD^*(B)$ & $(v,b,r,\l)$ & Conditions \\  \hline
 & & & $n \geq 3$ odd \\
$\le \AGL(n, 3)$ & & $\pmat{\frac{(3^n-1)3^j}{2}\\ 3^n\\ 3^{n-j}(3^j-1)\\ \frac{(3^n-1)(3^j-2)}{2}+ \frac{3^{n-j}-1}{2}}$ & $p= \frac{3^n-1}{2}$ \\
 & & & $1 \le j \le n-1$ \\ \hline
 & & & $a$ an odd divisor \\
 & & & of $2p+1$ \\ 
$\le \PGL(n,2)$ & & $\pmat{a(2^{n-1} - 1)\\ 2^n -1\\ \frac{(2^{n} - 1)(a-1)}{a}\\ (2^{n-1} - 1)(a - 2) + \frac{2^{n} - 1-a}{2a}}$ & $3 \le a \le \frac{2p+1}{3}$ \\
 & & & $p=2^{n-1} - 1$ \\ 
 & & & a Mersenne prime \\
 & & & ($n - 1 \ge 3$ a prime) \\ \hline
$A_7$ & $\overline{\DD^*}(B) \cong \PG(3,2)$ & $(35,15,12,22)$ & \\ \hline
  \end{tabular}
  \caption{Possibilities when $s=2$ in case (f).}
\label{tab2}
  \end{center}
\end{table}

\begin{remark}
\label{rem:e}
{\em (1) In (e), denote by $s$ the valency of $\Ga_{\PP}[\hat{B}, \hat{C}]$ for adjacent $B, C \in \BB$, and by $t$ the number of blocks of $\PP$ contained in $C$ which contain at least one neighbour of a fixed vertex in $B \cap \Ga(C)$. Since $r_{\hat \BB}=a-1$, the parameters with respect to $\PP$ satisfy $b_{\PP}=(a-1)s$ and $r_{\PP}=(a-1)t$. Since $v_{\PP}r_{\PP}= b_{\PP}k_{\PP}$ and $v_{\PP}=p$, we have $pt=k_{\PP}s$. Since $1 \le t \le s \le a-1$, $1 \le k_{\PP} \le p$ and $p$ is a prime, we have either (i) $k_{\PP}=p$ and $s=t$, or (ii) $s=pc$ and $t = k_{\PP}c$ for some integer $c$ with $1 \le c \le \lfloor \frac{a-1}{p}\rfloor$. 

Since $v-2k+\l = 0$ in (e), examples of $(\Ga, G, \BB)$ in this case can be constructed using \cite[Construction 3.8]{lz} by first lifting a $(G, 2)$-arc transitive graph to a $G$-symmetric 3-arc graph and then lifting the latter to a $G$-symmetric graph $\Ga$ by the standard covering graph construction \cite{b}.

(2) The condition $(v,b,r,\lambda)=(pa,a,a-1,p(a-2))$ in (e) is sufficient for $\Ga_{\BB}$ to be $(G, 2)$-arc transitive. In fact, in this case for any $B \in \BB$ and $\a \in B$, there exists exactly one block $A \in \Ga_{\BB}(B)$ which contains no neighbour of $\a$. Thus, for any distinct $C, D \in \Ga_{\BB}(B) \setminus \{A\}$, there exist $\b \in C$ and $\g \in D$ which are adjacent to $\a$ in $\Ga$. Since $\Ga$ is $G$-symmetric, there exists $g \in G_{\a}$ such that $\b^g=\g$. So $(B,C)^g=(B,D)$. Since $g$ fixes $\a$, it must fix the unique block of $\Ga_{\BB}(B)$ having no neighbour of $\a$, that is, $A^g = A$. It follows that $G_{A,B}$ is transitive on $\Ga_{\BB}(B) \setminus  \{A\}$ and hence $\Ga_{\BB}$ is $(G,2)$-arc transitive. 

(3)  In (f), it seems challenging to determine $G_B^{\Ga_{\BB}(B)}$ and $\DD^*(B)$ when $s$ is not specified.    

We appreciate Yuqing Chen for constructing the following example for the third row of Table \ref{tab1}. 
Denote $F = \GF(2^n)$ and let $H$ be a subgroup of the additive group $E = (\GF(2^n), +)$ of order $2^{n-m}$ (where $2^n - 1$ is not necessarily a Mersenne prime). Then $E \rtimes F^*$ acts on $E$ as a 2-transitive subgroup of $\AGL(n,2)$. The incidence structure whose point set is $E$ and blocks are the complements in $E$ of the $E \rtimes F^*$-orbits of $H$ is a $2$-$(2^n, 2^n - 2^{n-m}, (2^m - 1)(2^n - 2^{n-m} - 1))$ design admitting $E \rtimes F^*$ as a 2-point transitive group of automorphisms. 
}
\end{remark} 
    
In the case when $k=v-3 \ge 1$ or $k=v-5 \ge 1$, Theorem \ref{thm:para} enables us to obtain necessary and sufficient conditions for $\Ga_{\BB}$ to be $(G, 2)$-arc transitive. This will be given in Theorem \ref{thm:main} and Corollary \ref{thm:main5} in Section \ref{sec:3}, respectively.  

We will use standard notation and terminology on block designs \cite{bjl,CD} and permutation groups \cite{Dixon-Mortimer}. The set of arcs of a graph $\Si$ is denoted by $\Arc(\Si)$.

\section{Proof of Theorem \ref{thm:para}}

\begin{proof}{\bf of Theorem \ref{thm:para}}~
Suppose $\Ga_{\BB}$ is $(G, 2)$-arc transitive. Then $G_B$ is 2-transitive on $\Ga_{\BB}(B)$ and hence $\l$ defined in (\ref{eq:l}) is independent of the choice of distinct $C, D \in \Ga_{\BB}(B)$. It is known \cite[Section 3]{lz} that either $\l = 0$ or $\DD^{*}(B)$ is a $2$-$(b, r, \l)$ design of $v$ `blocks' with $G_B$ doubly transitive on its points and transitive on its blocks and flags. Since $k=v-p \ge 1$, we have 
\begin{equation}
\label{eq:vr1}
vr=b(v-p)
\end{equation}
and by \cite[Corollary 3.3]{lz}, 
\begin{equation}
\label{eq:lambda1}
\lambda(b-1)=(v-p)(r-1).
\end{equation}

Consider the case $\l=0$ first. In this case we have $r=1$ as $v-p \ge 1$. Thus $v=b(v-p)$ and so $v=p+ \frac{p}{b-1}$. Since $v$ is an integer and $p$ is a prime, we have $b=p+1$ or $2$, and therefore $(v,b,r,\lambda)=(p+1,p+1,1,0)$ or $(2p,2,1,0)$. In the former case, we have $k=v-p=1$ and $\Ga \cong (|V(\Ga)|/2) \cdot K_2$. Moreover, the actions of $G_B$ on $B$ and $\Ga_{\BB}(B)$ are permutationally isomorphic. Thus $G_{(B)} = G_{[B]}$ and $G_B^B \cong G_B^{\Ga_{\BB}(B)}$ is 2-transitive of degree $p+1$. On the other hand, for any connected $(p+1)$-valent $(G,2)$-arc transitive graph $\Si$, define $\Ga$ to be the graph with vertex set $\Arc(\Si)$ and edges joining $(\s, \t)$ to $(\t, \s)$ for all $(\s, \t) \in \Arc(\Si)$. Then $\Ga$ is $G$-symmetric admitting $\BB = \{B(\s): \s \in V(\Si)\}$ (where $B(\s) = \{(\s, \t): \t \in \Si(\s)\}$) as a $G$-invariant partition such that $(v,b,r,\lambda)=(p+1,p+1,1,0)$ and $\Ga_{\BB} \cong \Si$. (This simple construction was used in \cite[Example 2.4]{mpz} for trivalent $\Si$. It is a very special case of the flag graph construction \cite[Theorem 4.3]{zhou2003}.)
In the case where $(v,b,r,\lambda)=(2p,2,1,0)$, $\Ga[B, C]$ is a bipartite $G_{B,C}$-edge transitive graph with $p$ vertices in each part of its bipartition, $\Ga \cong n \cdot \Ga[B, C]$ where $n=|V(\Ga)|/2p$, $\Ga_{\BB} \cong C_n$, and therefore $G/G_{(\BB)} = D_{2n}$. 

Assume $\l \ge 1$ from now on. Denote by $r^*$ the replication number of $\DD^*(B)$, that is, the number of `blocks' containing a fixed `point'. We distinguish between the following two cases.

\medskip
\textbf{Case 1}: $v$ is not a multiple of $p$.
 
In this case, $v$ and $v-p$ are coprime. Thus, by (\ref{eq:vr1}), $v$ divides $b$ and $v-p$ divides $r$. On the other hand, as noticed in \cite{lz}, by the well-known Fisher's inequality we have $b \leq v$ and $r \leq v-p$. Thus $v=b$ and $r=v-p=k$. From (\ref{eq:lambda1}) we then have $\l = \frac{(v-p)(v-p-1)}{v-1} = (v-2p)+\frac{p(p-1)}{v-1}$. Note that $v \ne p+1$, for otherwise $\l=0$, which contradicts our assumption $\l \ge 1$. Since $\l$ is an integer, $v-1$ is a divisor of $p(p-1)$. Since $p$ is a prime and $v-1 \ge p+1$, it follows that $p$ is a divisor of $v-1$. Set $a = \frac{v-1}{p}$. Then $a \ge 2$ is a divisor of $p-1$ and $(v,b,r,\lambda)=\left(pa+1, pa+1, p(a-1)+1, p(a-2)+\frac{p+a-1}{a}\right)$. Hence $\DD^*(B)$ is a 2-transitive symmetric 2-$\left(pa+1, p(a-1)+1, p(a-2)+\frac{p+a-1}{a}\right)$ design. Thus, by the classification of 2-transitive symmetric designs \cite{k} (see also \cite[Theorem XII-6.22]{bjl}), $\DD^*(B)$ or $\overline{\DD^*}(B)$ is isomorphic to one of the following: 
\begin{itemize}
\item $\PG_{n-1}(n, q)$ (where $n \ge 2$ and $q$ is a prime power);
\item the unique 2-$(11, 5, 2)$ design; 
\item the unique symmetric 2-$(176, 50, 14)$ design;  
\item the unique 2-$(2^{2m}, 2^{m-1}(2^{m} - 1), 2^{m-1}(2^{m-1} - 1))$ design (where $m \ge 2$). 
\end{itemize}
Since $\PG_{n-1}(n, q)$ has $\frac{q^{n+1}-1}{q-1}$ points and block size $\frac{q^{n}-1}{q-1}$, while $\DD^*(B)$ has $pa+1$ `points' and block size $p(a-1)+1$, by comparing these parameters one can show that $\DD^*(B) \not \cong \PG_{n-1}(n, q)$. In the same fashion we can see that none of the 2-transitive symmetric designs above can occur as $\DD^*(B)$. On the other hand, since $p$ is a prime, $\overline{\DD^*}(B)$ cannot be isomorphic to the unique symmetric 2-$(176, 50, 14)$ design or the unique 2-$(2^{2m}, 2^{m-1}(2^{m} - 1), 2^{m-1}(2^{m-1} - 1))$ design. We are left with the case where $\overline{\DD^*}(B) \cong \PG_{n-1}(n, q)$ or $\overline{\DD^*}(B)$ is isomorphic to the unique 2-$(11, 5, 2)$ design.

It is easy to verify that $\overline{\DD^*}(B) \cong \PG_{n-1}(n, q)$ only if $p = \frac{q^n - 1}{q-1}$ and $a = q$. In this case, $G_B^{\Ga_{\BB}(B)}$ is isomorphic to a 2-transitive subgroup of $\PGammaL(n+1,q)$ since $G_B^{\Ga_{\BB}(B)} \le \Aut(\overline{\DD^*}(B)) \cong \PGammaL(n+1,q)$. Moreover, we have $G_B^B \cong G_B^{\Ga_{\BB}(B)}$ since the actions of a 2-transitive subgroup of $\PGammaL(n+1,q)$ on the point set and the block set of $\PG_{n-1}(n, q)$ are permutationally isomorphic. Furthermore, if $g \in G_{(\BB)}$, then $g \in G_{(B)}$ since $\overline{\DD^*}(B)$ is self-dual. Since this holds for every $B \in \BB$, $g$ fixes every vertex of $\Ga$. Since $G \le \Aut(\Ga)$ is faithful on $V(\Ga)$, we conclude that $g=1$ and so $G$ is faithful on $\BB$. Therefore,  (c) occurs. 

$\overline{\DD^*}(B)$ is isomorphic to the unique 2-$(11, 5, 2)$ design if and only if $p=5$ and $a = 2$. In this case, since the automorphism group of this symmetric 2-$(11, 5, 2)$ design is $\PSL(2,11)$ (see e.g. \cite[Theorem IV.7.14]{bjl}), $G_B^B$ is isomorphic to a 2-transitive subgroup of $\PSL(2,11)$. Since $|G_B^{B}| \ge 11 \cdot 10$ but no proper subgroup of $\PSL(2,11)$ has order greater than 60, we have $G_B^B \cong G_B^{\Ga_{\BB}(B)} \cong \PSL(2,11)$ and (d) occurs.
 
\medskip
\textbf{Case 2}: $v=pa$ is a multiple of $p$, where $a \geq 2$ is an integer.  

In this case (\ref{eq:vr1}) becomes $ar=b(a-1)$. Thus $a$ divides $b$ and $a-1$ divides $r$. On the other hand, Fisher's inequality yields $b \le pa$ and $r \le p(a-1)$. So $b=at$ and $r=(a-1)t$ for some integer $t$ between $1$ and $p$. By (\ref{eq:lambda1}), $\l=\frac{p(a-1)((a-1)t-1)}{at-1} =p(a-2)+ \frac{p(t-1)}{at-1}$. 

\medskip
\textbf{Subcase 2.1}: $t=1$. 
Then $(v,b,r,\lambda)=(pa,a,a-1,p(a-2))$, where $a\geq 3$ as $\l \ge 1$ by our assumption. Thus, by \cite[Eq.(3)]{lz}, any two distinct `blocks' of $\overline{\DD}(B)$ intersect at $\overline{\l} = v-2k+\l = 0$ `points'. That is, the `blocks' $B \setminus \Ga(C)$ ($C \in \Ga_{\BB}(B)$) of $\overline{\DD}(B)$ are pairwise disjoint and hence \cite[Theorem 3.7]{lz} applies. Following \cite[Section 3]{lz}, define
$\PP= \cup_{B\in\BB} \{B\setminus\Ga(C):  C\in \Ga_{\BB}(B)\}$.
Then $\PP$ is a proper refinement of  $\BB$. Denote $\hat B = \{B\setminus\Ga(C)\in \PP:  C\in \Ga_{\BB}(B)\}$. Then $\hat \BB=\{\hat B :  B\in  \BB\}$ is a $G$-invariant partition of $\PP$. Denote by $v_{\hat \BB}$,  $k_{\hat \BB}$,  $b_{\hat \BB}$, $r_{\hat \BB}$ the parameters with respect to $(\Ga_{\PP}, \hat \BB)$. It can be verified (see \cite[Theorem 3.7]{lz}) that $(\Ga_{\PP})_{\hat \BB} \cong \Ga_{\BB}$, $v_{\hat \BB}=v/p=a$, $k_{\hat \BB}=v_{\hat \BB}-1=a-1$, $b_{\hat \BB}=b=a$, $r_{\hat \BB}= r =a-1$ and $\DD(\hat{B})$ has no repeated blocks. Thus, by \cite[Theorem 1]{lpz} (or \cite[Theorem 3.7]{lz}),  $\Ga_{\PP}\cong \Xi(\Ga_{\BB}, \Delta)$ for some self-paired $G$-orbit $\Delta$ on the set of 3-arcs of $\Ga_{\BB}$. Hence (e) occurs. 

\medskip
\textbf{Subcase 2.2}: $t \ge 2$. In this case, since $\l$ is an integer, $at-1$ is a divisor of $p(t-1)$. In particular, $at-1 \le p(t-1)$, which implies $a \le p-1$ and $\frac{p-1}{p-a} \le t \le p$. Since $at-1$ does not divide $t-1$ and $p$ is a prime, $at-1$ must be a multiple of $p$, say, $at-1 = ps$, so that $\frac{p(t-1)}{at-1} = \frac{t-1}{s}$ and $s$ divides $t-1$. Since $t = \frac{ps+1}{a}$ is an integer, $a$ is a divisor of $ps+1$. Therefore, $(v,b,r,\l) = \left(pa, ps+1, \frac{(ps+1)(a-1)}{a}, p(a-2) + \frac{ps-a+1}{as}\right)$. Since $\l$ is an integer, $s$ is a divisor of $\frac{ps-a+1}{a}$ and so $s$ is a divisor of $a-1$. This together with $\frac{p-1}{p-a} \le t = \frac{ps+1}{a}$ implies $\frac{a-1}{p-a} \le s \le a-1$. Therefore, case (f) occurs.   

The rest of the proof is devoted to the case $s=1$ in (f). In this case $\DD^*(B)$ is a 2-$\left(p+1, \frac{(p+1)(a-1)}{a}, p(a-2) + \frac{p-a+1}{a}\right)$ design with $pa$ `blocks' such that each `point' is contained in exactly $r^* = p(a-1)$ `blocks'. Moreover, $\DD^*(B)$ admits $G_B$ as a group of automorphisms acting 2-transitively on the set $\Ga_{\BB}(B)$ of $p+1$ `points'. All 2-transitive groups are known (see e.g. \cite[Section 7.7]{Dixon-Mortimer}). First, since $p+1 \ne q^3 + 1, q^2 + 1$ for any prime power $q$, $G_B^{\Ga_{\BB}(B)}$ cannot be a unitary, Suzuki or Ree group. Since $r < p+1$, $S_{p+1}$ is $r$-transitive on $p+1$ points (in its natural action) but on the other hand $\DD^*(B)$ has $pa < \choose{p+1}{r}$ blocks. Hence $G_B^{\Ga_{\BB}(B)} \not \cong S_{p+1}$. Similarly, as $A_{p+1}$ is $(p-1)$-transitive in its natural action, $G_B^{\Ga_{\BB}(B)} \not \cong A_{p+1}$ unless $r = p-1$. In this exceptional case, $\DD^*(B)$ has $pa = \choose{p+1}{2}$ blocks and so is isomorphic to the complementary design of the trivial design $K_{p+1}$. This gives the second row in Table \ref{tab1}. 

If $G_B^{\Ga_{\BB}(B)}$ is affine, then $p+1 = q^n$ for some prime power $q$ and integer $n \ge 1$, which occurs if and only if $q=2$ and $p=2^n - 1$ is a prime.
In this case, $n$ must be a prime and $p=2^n - 1$ is a Mersenne prime, and $G_B^{\Ga_{\BB}(B)}$ is isomorphic to a 2-transitive subgroup of $\AGL(n,2)$. Moreover, $a = 2^m$ for some integer $1 \le m \le n-1$, and so $(v,b,r,\l,r^*) = (2^m (2^n - 1), 2^n, 2^n - 2^{n-m}, (2^m - 1)(2^n - 2^{n-m} - 1), (2^n - 1)(2^m - 1))$. This gives the third row in Table \ref{tab1}. 

If $G_B^{\Ga_{\BB}(B)}$ is projective, then $p+1 = \frac{q^n - 1}{q-1}$ for a prime power $q$ and an integer $n \ge 2$. Thus $n=2, p = q$ and $G_B^{\Ga_{\BB}(B)}$ is isomorphic to a 2-transitive subgroup of $\PGL(2, p)$. Since $G_B$ is transitive on the $p(p+1)(a-1)$ flags of $\DD(B)$, $p(p+1)(a-1)$ is a divisor of $|\PGL(2, p)| = (p-1)p(p+1)$ and so $a-1$ is a divisor of $p-1$. Note that the second 2-transitive action of $A_5 \cong \PSL(2,5)$ with degree 6 is covered here, and that of $A_6 \cong \PSL(2,9)$ with degree 10, of $A_7$ with degree 15, and of $A_8 \cong \PSL(4,2)$ with degree 15 cannot happen since 9 and 14 are not prime. Similarly, the second 2-transitive action of $\PSL(2,8) \le \Sp_{6}(2)$ of degree 28 and that of $\PSL(2,11) \le M_{11}$ of degree 11 cannot happen. So we have the fourth row in Table \ref{tab1}. 

If $G_B^{\Ga_{\BB}(B)}$ is symplectic, then $p+1 = 2^{m-1}(2^m \pm 1)$ for some $m \geq 2$. If $p+1 = 2^{m-1}(2^m + 1)$, then $p = (2^{m-1}+1)(2^m-1)$, which cannot happen since $p$ is a prime. Similarly, if $p+1 = 2^{m-1}(2^m - 1)$, then $p = (2^{m-1}-1)(2^m+1)$, which occurs if and only if $m=2$ and $p=5$. In this exceptional case, we have $G_B^{\Ga_{\BB}(B)} \cong \Sp_{4}(2)$ ($\cong S_6$), $a=2$ or 3, and hence $(v,b,r,\l, r^*) = (10, 6, 3, 2, 5)$ or $(15, 6, 4, 6, 10)$. The latter cannot happen since a 2-$(6,4,6)$ design does not exist \cite[Table A1.1]{bjl}. Thus $\DD^*(B)$ is isomorphic to the unique $2$-$(6,3,2)$ design. This gives the fifth row in Table \ref{tab1}. 
 
By comparing the degree $p+1$ of $G_B^{\Ga_{\BB}(B)}$ with that of the ten sporadic 2-transitive groups \cite[Section 7.7]{Dixon-Mortimer}, one can verify that among such groups only the following may be isomorphic to $G_B^{\Ga_{\BB}(B)}$: 
$M_{11}$ (degree $p+1=12$);
$M_{12}$ (degree $p+1=12$);
$M_{24}$ (degree $p+1=24$). 

In the case of $M_{24}$, $a$ is 2, 3, 4, 6, 8 or 12, and so $(v,b,r,\l, r^*) = (46,24,12,11,23)$, $(69,24,16,30,46)$, $(92,24,18,51,69)$, $(138,24,20,95,115)$, $(184,24,21,140,161)$ or $(276,24,22,$ $231,253)$. It is well known \cite[Chapter IV]{bjl} that $M_{24}$ is the automorphic group of the unique $5$-$(24, 8, 1)$ design (the Witt design $W_{24}$) which is also a $2$-$(24, 8, 77)$ design, and that up to isomorphism the natural action of $M_{24}$ on the points of $W_{24}$ is the only 2-transitive action of $M_{24}$ with degree 24. Hence $G_B^{\Ga_{\BB}(B)} \not \cong M_{24}$.

In the cases of $M_{11}$ and $M_{12}$, $a$ is 2, 3, 4 or 6, and so $(v,b,r,\l, r^*) = (22,12,6,5,11)$, $(33,12,8,14,22)$, $(44,12,9,24,33)$ or $(66,12,10,45,55)$. Since by \cite[Section II.1.3]{CD} a 2-$(12, 8, 14)$ or 2-$(12, 9, 24)$ design does not exist, the second and third possibilities can be eliminated. Thus, if $G_B^{\Ga_{\BB}(B)} \cong M_{11}$ or $M_{12}$, then $\DD^*(B)$ is isomorphic to a 2-$(12,6,5)$ or 2-$(12,10,45)$ design. It is well-known \cite[Chapter IV]{bjl} that $M_{12}$ is the automorphic group of the unique $5$-$(12, 6, 1)$ design (the Witt design $W_{12}$) which is also a $2$-$(12, 6, 30)$ design. Since up to isomorphism the natural action of $M_{12}$ on the points of $W_{12}$ is the only 2-transitive action of $M_{12}$ with degree 12, we have $G_B^{\Ga_{\BB}(B)} \not \cong M_{12}$. $M_{11}$ is the automorphic group of a $3$-$(12, 6, 2)$ design (that is, a Hadamard 3-subdesign of $W_{12}$ \cite[Chapter IV]{bjl}) which is also a 2-$(12,6,5)$ design. Since up to isomorphism the natural action of $M_{11}$ on the points of this design is the only 2-transitive action of $M_{11}$ with degree $12$, we conclude that if $G_B^{\Ga_{\BB}(B)} \cong M_{11}$ then $\DD^*(B)$ is isomorphic to this 2-$(12,6,5)$ design. 
This gives the last row in Table \ref{tab1}.

In the same fashion one can prove that, if $s=2$ in (f), then we have possibilities in Table \ref{tab2}. 
\qed
\end{proof}
 

\delete{
All graphs $\Ga$ in case (c) of Theorem \ref{thm:para} can be constructed by using the flag graph construction \cite{zhou2003}. Let $\DD$ be a $G$-point-transitive and $G$-block-transitive $1$-design (with block size at least $2$). A $G$-orbit $\Om$ on the set of flags of $\DD$ is called {\em feasible} if (i) for a point $\s$ of $\DD$, the set $\Om(\s)$ of flags of $\DD$ with point entry $\s$ has size at least two, and (ii) the stabilizer $G_{\s, L}$ of a flag $(\s, L) \in \Om$ in $G$ is transitive on $L \setminus \{\s\}$. For such a $\Om$, a $G$-orbit $\Psi$ on the set $\Om^{(2)}$ of ordered pairs of distinct flags of $\Om$ is said to be {\em compatible} with $\Om$ if (iii) $\s \neq \t$ and $\s, \t \in L \cap N$ for some (and hence all) $((\s,L), (\t,N)) \in \Psi$. For such a pair $(\Om, \Psi)$, the {\em flag graph} $\Ga(\DD,\Om,\Psi)$ is defined \cite{zhou2003} to have vertex set $\Om$ and arc set $\Psi$.

\begin{construction} 
\label{con:case c}
{\em In case (c) of Theorem \ref{thm:para}, since $\overline{\DD^*}(B) \cong \PG_{n-1}(n, q)$, $\Ga_{\BB}(\a) \ne \Ga_{\BB}(\b)$ for distinct $\a, \b \in B$ and so $\Ga$ is isomorphic to a $G$-flag graph \cite{zhou2003}. More explicitly, let $\LL(\a) = \{B(\a)\} \cup \Ga_{\BB}(\a)$ and define $\bfL$ to be the set of all $\LL(\a)$, $\a \in V(\Ga)$, with repeated ones identified. Define $\DD = (\BB, \bfL)$ \cite{zhou2003} such that $B$ is incident with $\LL(\a)$ if and only if $B \in \LL(\a)$. Then $\DD$ is a $G$-point-transitive and $G$-block-transitive $1$-design \cite[Lemma 3.1]{zhou2003}. By \cite[Theorem 1.1]{zhou2003}, $\Ga \cong \Ga(\DD, \Om, \Psi)$ for the feasible $G$-orbit $\Om = \{(B(\a), \LL(\a)): \a \in V(\Ga)\}$ on the flags of $\DD$ and a certain self-paired $G$-orbit $\Psi$ on $\Om^{(2)}$ compatible with $\Om$. (See the proof of \cite[Theorem 1.1]{zhou2003} for the definition of $\Psi$.) Note that the flags $(B, \LL(\a))$ of $\DD$ with `point' entry $B$ (where $\a \in B$) are in one-to-one correspondence with the `blocks' $\Ga_{\BB}(\a)$ of $\DD^*(B)$, and that $G$ is a 2-transitive subgroup of $\Aut(\DD^*(B)) = \PGammaL(n+1, q)$.  

Conversely, suppose $\DD$ is a $G$-point-transitive and $G$-flag-transitive 1-design of block size $q^n + 1$, $\Om$ is a feasible $G$-orbit on the flags of $\DD$ and $\Psi$ is a self-paired $G$-orbit on $\Om^{(2)}$ compatible with $\Om$. Suppose further that, for a fixed point $\s$, the incidence structure $\DD_{\s}$ with point set $\cup_{(\s, L) \in \Om(\s)} L \setminus \{\s\}$ and blocks $L \setminus \{\s\}$, $(\s, L) \in \Om(\s)$, is isomorphic to the complementary 2-design of $\PG_{n-1}(n, q)$ with automorphism group $G_{\s}$ isomorphic to a 2-transitive subgroup of $\PGammaL(n+1, q)$. Then by \cite[Theorem 1.1]{zhou2003} $\Ga = \Ga(\DD, \Om, \Psi)$ is a $G$-symmetric graph and $\BB(\Om) = \{\Om(\s): \mbox{$\s$ a point of $\DD$}\}$ is a $G$-invariant partition of $V(\Ga)$ such that the parameters with respect to $(\Ga, \BB(\Om))$ satisfies $v_{\BB(\Om)} = b_{\BB(\Om)} = \frac{q^{n+1}-1}{q-1}$ and $r_{\BB(\Om)}=k_{\BB(\Om)}=q^n$. (Note that $v_{\BB(\Om)}$ and $b_{\BB(\Om)}$ are equal to the numbers of blocks and points of $\DD_{\s}$ respectively. That $r_{\BB(\Om)}=q^n$ is from \cite[Theorem 1.1]{zhou2003} and the assumption that $\DD$ has block size $q^n+1$.) Moreover, since $G_{\s}$ is 2-transitive on the points of $\DD_{\s}$, it is 2-transitive on the neighbourhood of $B(\s)$ in $\Ga_{\BB(\Om)}$. Therefore, $\Ga_{\BB(\Om)}$ is $(G, 2)$-arc transitive.}
\qed
\end{construction} 
}

\section{$p=3, 5$}
\label{sec:3}

Theorem \ref{thm:para} provides a necessary condition for $\Ga_{\BB}$ to be $(G,2)$-arc transitive when $k=v-p$ for any prime $p \ge 3$. This condition may be sufficient for some special primes $p$, and in this section we prove that this is the case when $p=3$ or $5$. Moreover, when $p=3$ we obtain more structural information about $\Ga$ (see Theorem \ref{thm:main} below). In particular, in the last case in Theorem \ref{thm:main} (which corresponds to case (f) in Theorem \ref{thm:para}), $\Ga$ can be constructed from $\Ga_{\BB}$ by using a simple construction introduced in \cite[Section 4.1]{lz}. Given a regular graph $\Si$ with valency at least $2$ and a self-paired subset $\Delta$ of the set of 3-arcs of $\Si$, define \cite{lz} $\Ga_2(\Sigma, \Delta)$ to be the graph with the set of {\em $2$-paths} (paths of length 2) of $\Si$ as vertex set such that two distinct ``vertices" $\t\s\t'$ ($=\t'\s\t$) and $\eta \ve \eta'$ ($=\eta' \ve \eta$) are adjacent if and only if they have a common edge (that is, $\s \in \{\eta, \eta'\}$ and $\ve \in \{\t, \t'\}$) and moreover the two $3$-arcs
(which are reverses of each other) formed by ``gluing" the common edge are in $\De$. (As noted in \cite{lz}, when $\De$ is the set of all 3-arcs of $\Si$, $\Ga_2(\Sigma, \Delta)$ is exactly the path graph $P_3(\Si)$ introduced in \cite{bh}.)

In the proof of Theorem \ref{thm:main} we will use the following Lemma.  

\begin{lemma} 
\label{le1}
Let $\Ga$ be a $G$-symmetric graph that admits a nontrivial $G$-invariant partition $\BB$ such that $k=v-i$, where $i\geq 1$. Then the multiplicity $m$ of $\DD(B)$ and $\overline{\DD}(B)$ is a common divisor of $r$ and $b$. 
\end{lemma}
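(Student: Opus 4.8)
The plan is to exploit the single equivalence relation on the neighbourhood $\Ga_{\BB}(B)$ that simultaneously governs both designs, and then read off the two divisibilities from the orbit structure of $G_B$ together with a point-by-point count. First I would fix a block $B \in \BB$ and define the relation $\approx$ on $\Ga_{\BB}(B)$ by declaring $C \approx D$ whenever $\Ga(C) \cap B = \Ga(D) \cap B$. Since $\Ga(C) \cap B = \Ga(D) \cap B$ is equivalent to $B \setminus \Ga(C) = B \setminus \Ga(D)$, the relation $\approx$ is precisely the ``repeated block'' relation of $\DD(B)$ and, at the same time, of its complement $\overline{\DD}(B)$; this is what makes the multiplicity common to the two structures. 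Here the hypothesis $i \ge 1$ enters only to guarantee that the blocks $B \setminus \Ga(C)$ of $\overline{\DD}(B)$ are nonempty, so that $\overline{\DD}(B)$ is a genuine $1$-$(v, v-k, b-r)$ design. By definition the $\approx$-class of any $C$ has cardinality $m$.

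Next I would establish $m \mid b$. The key point is that $G_B$ preserves $\approx$: for $g \in G_B$ one has $\Ga(C^g) \cap B = (\Ga(C) \cap B)^g$ because $B^g = B$ and $g$ is an automorphism of $\Ga$, so $C \approx D$ forces $C^g \approx D^g$. As $G_B$ is transitive on $\Ga_{\BB}(B)$ (the block set of $\DD(B)$), the $\approx$-classes form a $G_B$-block system and hence all have the same size, namely $m$. Consequently the $b = |\Ga_{\BB}(B)|$ blocks split into $b/m$ classes of size $m$, forcing $m \mid b$.

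Finally I would obtain $m \mid r$ by fixing a point $\a \in B$ and examining the set $\Ga_{\BB}(\a) = \{C \in \Ga_{\BB}(B): \a \in \Ga(C)\}$, which has size $r$. If $C \approx D$, then $\a \in \Ga(C)$ if and only if $\a \in \Ga(D)$, because the incidence of $\a$ depends only on the common trace $\Ga(C) \cap B$ (note $\a \in B$). Hence $\Ga_{\BB}(\a)$ is a union of $\approx$-classes, so its size $r$ is a multiple of $m$, giving $m \mid r$. There is no serious obstacle in this argument: the only point requiring care is the uniform class-size claim, which rests squarely on the transitivity of $G_B$ on $\Ga_{\BB}(B)$, together with the observation that $\a$-incidence is constant on each $\approx$-class.
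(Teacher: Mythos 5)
Your proof is correct and is essentially the paper's own argument: both rest on the observation that the blocks of $\DD(B)$ (equivalently, of $\overline{\DD}(B)$) fall into groups of $m$ identical ones, with incidence of a fixed point $\a\in B$ constant on each group, which yields $m \mid r$ and $m \mid b$. The only cosmetic difference is that the paper, viewing the designs as hypergraphs, deduces $m\mid b$ indirectly from $m\mid r$ together with $m\mid (b-r)$ (the valency of $\overline{\DD}(B)$), whereas you read $m\mid b$ off directly from the partition of $\Ga_{\BB}(B)$ into $\approx$-classes of size $m$.
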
 

\begin{proof}
As in \cite{lz}, we may view $\DD(B)$ and $\overline{\DD}(B)$ as hypergraphs with vertex set $B$ and hyperedges $\Ga(C) \cap B$ and $B \setminus \Ga(C)$, $C \in \Ga_{\BB}(B)$, respectively, with each hyperedge repeated $m$ times. It is easy to see that as hypergraphs they have valencies $\val(\DD(B))=r=b-\frac{ib}{v}$ and $\val(\overline{\DD}(B))=b-r$, respectively. Since $m$ is a divisor of each of these valencies, it must be a common divisor of $r$ and $b$.
\qed
\end{proof}

Denote by $K_{n,n}$ the complete bipartite graph with $n$ vertices in each part of its bipartition, and by $\Si_1 - \Si_2$ the graph obtained from a graph $\Si_1$ by deleting the edges of a spanning subgraph $\Si_2$ of $\Si_1$. Denote by $G_{B, C}$ the subgroup of $G$ fixing $B$ and $C$ setwise. A few statements in the following theorem are carried over directly from Theorem \ref{thm:para}, and we keep them there for the completeness of the result. 
 
\begin{theorem} 
\label{thm:main}
Let $\Ga$ be a $G$-symmetric graph with $V(\Ga)$ admitting a nontrivial $G$-invariant partition $\BB$ such that $k=v-3 \geq 1$ and $\Ga_{\BB}$ is connected of valency $b \geq 2$, where $G \le \Aut(\Ga)$.  Then $\Ga_{\BB}$ is $(G, 2)$-arc transitive if and only if one of the following holds: 
\begin{itemize}
\item[\rm (a)] $(v,b,r,\lambda)=(4,4,1,0)$  and $G_B^B \cong A_4$ or $S_4$; 
\item[\rm (b)] $(v,b,r,\lambda)=(6,2,1,0)$ and $\Ga_{\BB} \cong C_n$, where $n = |V(\Ga)|/6$;
\item[\rm (c)] $(v,b,r,\lambda)=(7,7,4,2)$ and $G_B^B \cong \PSL(3,2)$;
\item[\rm (d)]  $(v,b,r,\lambda)=(3a,a,a-1,3a-6)$ for some integer $a \geq 3$;
\item[\rm (e)] $(v,b,r,\lambda)=(6,4,2,1)$ and $G_B^{\Ga_{\BB}(B)} \cong A_4$ or $S_4$.
\end{itemize}
Moreover, in (a) we have $G_B^{\Ga_{\BB}(B)} \cong A_4$ or $S_4$, $\Ga \cong (|V(\Ga)|/2) \cdot K_2$, and every connected 4-valent 2-arc transitive graph can occur as $\Ga_{\BB}$ in (a). 

In (b), we have $\Ga \cong 3n \cdot K_2, n \cdot C_6$ or $n \cdot K_{3,3}$, and $G/G_{(\BB)} = D_{2n}$.

In (c), $\overline{\DD}(B)$ is isomorphic to the Fano plane $\PG(2,2)$, $G_B^{\Ga_{\BB}(B)} \cong \PSL(3,2)$, $G$ is faithful on $\BB$, and $\Ga[B,C] \cong 4 \cdot K_2$, $K_{4,4} - 4 \cdot K_2$ or $K_{4,4}$. In the first case $\Ga$ is $(G,2)$-arc transitive, and in the last two cases $\Ga$ is connected of valency 12 and 16 respectively.  

In (d), the statements in (e) of Theorem \ref{thm:para} hold with $p=3$.

In (e), we have $\Ga \cong \Ga_2(\Ga_{\BB}, \De)$ for a self-paired $G$-orbit $\De$ on 3-arcs of $\Ga_{\BB}$, and every connected 4-valent $(G, 2)$-arc transitive graph can occur as $\Ga_{\BB}$ in (e).
\end{theorem}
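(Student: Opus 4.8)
The plan is to specialize Theorem~\ref{thm:para} to the case $p=3$, establishing the equivalence in Theorem~\ref{thm:main} by showing the necessary conditions there are also sufficient, and then refining the structural information in each case. First I would note that the forward direction (necessity) is almost immediate: setting $p=3$ in Theorem~\ref{thm:para}, cases (a)--(f) of Table~\ref{tab0} collapse, since $p=3$ forces severe arithmetic constraints. Case (a) of Table~\ref{tab0} gives $(v,b,r,\l)=(4,4,1,0)$, matching (a) here; case (b) gives $(6,2,1,0)$, matching (b); case (e) gives $(3a,a,a-1,3(a-2))$, matching (d). For case (c) of Table~\ref{tab0}, the requirement $p=\frac{q^n-1}{q-1}$ with $p=3$ forces $q=2,n=2$, yielding $p=3$ and the $2$-$(7,4,2)$ design $\overline{\DD^*}(B)\cong\PG(2,2)$ (the Fano plane), so $(v,b,r,\l)=(7,7,4,2)$ with $G_B^B\cong\PSL(3,2)$, which is (c) here. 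Case (d) of Table~\ref{tab0} needs $p=5$ and is excluded. For case (f) with $p=3$, the constraint $\frac{a-1}{p-a}\le s\le a-1\le p-2=1$ forces $a=2$ and $s=1$; consulting Table~\ref{tab1}, the only surviving row is the symplectic one ($\Sp_4(2)$ requires $p=5$, excluded) versus the $\PGL(2,p)$ row with $p=3$, giving $a=2$, hence $(v,b,r,\l)=(6,4,2,1)$ with $G_B^{\Ga_{\BB}(B)}$ a $2$-transitive subgroup of $\PGL(2,3)\cong S_4$, i.e.\ $A_4$ or $S_4$, which is (e) here.

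Next I would establish sufficiency and the supplementary structural claims case by case. For (a), sufficiency and the identifications $\Ga\cong(|V(\Ga)|/2)\cdot K_2$, $G_B^B\cong G_B^{\Ga_{\BB}(B)}$ of degree $4$, and the realization of any connected $4$-valent $(G,2)$-arc transitive graph as $\Ga_{\BB}$ all follow from the $\l=0$, $(v,b,r,\l)=(p+1,p+1,1,0)$ analysis already carried out in the proof of Theorem~\ref{thm:para} (the arc-graph construction), specialized to $p+1=4$; I would only need to observe that a $2$-transitive group of degree $4$ is $A_4$ or $S_4$. For (b), the $(2p,2,1,0)$ analysis gives $\Ga_{\BB}\cong C_n$ and $G/G_{(\BB)}=D_{2n}$ directly; the new content is determining $\Ga[B,C]$, which is a $G_{B,C}$-edge-transitive bipartite graph on $3+3$ vertices, hence $3\cdot K_2$, $C_6$, or $K_{3,3}$, giving $\Ga\cong 3n\cdot K_2$, $n\cdot C_6$, or $n\cdot K_{3,3}$. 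For (d), sufficiency is exactly Remark~\ref{rem:e}(2) with $p=3$, and the structural statement is inherited from case (e) of Theorem~\ref{thm:para}.

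The two cases requiring genuine new argument are (c) and (e). For (c), sufficiency follows since $G_B$ acting on the $7$ points of the Fano plane $\PG(2,2)$ via $\PSL(3,2)$ is $2$-transitive, hence $\Ga_{\BB}$ is $(G,2)$-arc transitive; faithfulness of $G$ on $\BB$ comes from self-duality of $\overline{\DD^*}(B)$ as in Theorem~\ref{thm:para}(c). The new work is determining $\Ga[B,C]$: here $k=4$, so $\Ga[B,C]$ is a bipartite graph on $4+4$ vertices admitting an edge-transitive action; invoking Lemma~\ref{le1}, the multiplicity $m$ of $\DD(B)$ divides $\gcd(r,b)=\gcd(4,7)=1$, so $m=1$ and $\DD(B)$ has no repeated blocks, and the valency of $\Ga[B,C]$ together with edge-transitivity pins it to $4\cdot K_2$, $K_{4,4}-4\cdot K_2$, or $K_{4,4}$; a short computation of the resulting valency of $\Ga$ gives $b\cdot(\text{valency of }\Ga[B,C]\text{ per vertex})$, yielding the stated valencies $12$ and $16$ in the connected cases, while $4\cdot K_2$ gives the $(G,2)$-arc transitive $\Ga$. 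For (e), the key claim $\Ga\cong\Ga_2(\Ga_{\BB},\De)$ is the heart of the theorem: with $(v,b,r,\l)=(6,4,2,1)$, $p=3$, and $k=3$, one checks $\overline{\l}=v-2k+\l=6-6+1=1$, so the complementary blocks meet pairwise in exactly one point; I would invoke the construction $\Ga_2(\Sigma,\Delta)$ from \cite[Section~4.1]{lz} and verify that $(\Ga,\BB)$ realizes its defining parameters, with $G$-edge-transitivity on $\Ga[B,C]$ supplying the self-paired $G$-orbit $\De$ on $3$-arcs of $\Ga_{\BB}$; the converse realization (that every connected $4$-valent $(G,2)$-arc transitive graph arises as $\Ga_{\BB}$) follows by reversing the construction. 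The main obstacle I anticipate is precisely the identification $\Ga\cong\Ga_2(\Ga_{\BB},\De)$ in case (e): verifying that the local incidence pattern forced by $(6,4,2,1)$ and $\overline{\l}=1$ matches exactly the $2$-path gluing rule of $\Ga_2(\Sigma,\Delta)$, and checking the compatibility and self-pairing conditions, will require careful bookkeeping of how neighbours in adjacent blocks correspond to $3$-arcs of $\Ga_{\BB}$, and this is where I expect to lean most heavily on the machinery already developed in \cite{lz}.
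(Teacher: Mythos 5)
Your overall route is the same as the paper's: necessity by specializing Theorem~\ref{thm:para} to $p=3$, sufficiency case by case, with the genuine new work concentrated in (c) and (e). Case (e) is handled essentially as in the paper (the identification $\Ga \cong \Ga_2(\Ga_{\BB},\De)$ is pulled directly from \cite[Theorem 4.3]{lz}, which applies precisely because $(\l,r)=(1,2)$, and the converse realization uses the same theorem together with the existence of a self-paired orbit from \cite{lpz}), so your anticipated ``heavy bookkeeping'' there is already packaged in the cited result.

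The genuine gap is in case (c), in your claim that edge-transitivity of $\Ga[B,C]$ on $4+4$ vertices ``pins it to $4\cdot K_2$, $K_{4,4}-4\cdot K_2$, or $K_{4,4}$.'' This is false as stated: $2\cdot C_4$ and $C_8$ are also edge-transitive bipartite graphs with four vertices on each side, and nothing in valency plus $G_{B,C}$-edge-transitivity alone excludes them. Ruling them out is the main new argument in the paper's proof of this theorem, and it is not a formality. The paper uses that $\l=2$ and the multiplicity is $1$, so the six pairs $\{v_i,v_j\}\subseteq B\cap\Ga(A)$ are exactly the six sets $B\cap\Ga(A)\cap\Ga(F)$ for $F\in\Ga_{\BB}(B)\setminus\{A\}$; then $(G,2)$-arc transitivity of $\Ga_{\BB}$ supplies $g\in G_{A,B}$ carrying $\{v_1,v_2\}$ to $\{v_1,v_3\}$, which is incompatible with the action of $G_{A,B}$ on $2\cdot C_4$ (where the two $4$-cycles are preserved or swapped as blocks) and on $C_8$ (where distances $2$ and $4$ along the cycle are preserved). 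You need this argument, or something equivalent, before you can list the three possibilities for $\Ga[B,C]$. A second, minor, slip in the same case: the valency of $\Ga$ is $r$ times the valency of a vertex in $\Ga[B,C]$, not $b$ times; with $r=4$ this gives $4\cdot 3=12$ and $4\cdot 4=16$ as claimed, whereas $b=7$ would give the wrong numbers. Finally, in reading off case (f) of Table~\ref{tab0} for $p=3$ you state that only the $\PGL(2,p)$ row of Table~\ref{tab1} survives; in fact the $A_{p+1}$ and $\AGL(n,2)$ rows also admit $p=3$, but since every $2$-transitive group of degree $4$ is $A_4$ or $S_4$ the conclusion is unaffected.
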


\begin{proof}  
\textit{Necessity}~~Suppose $\Ga_{\BB}$ is $(G, 2)$-arc transitive. Since $p=3$, by Theorem \ref{thm:para}, $(v,b,r,\lambda)$ is one of the following: 
\begin{itemize}
\item[] (a) $(4,4,1,0)$;\quad (b) $(6,2,1,0)$;\quad (c) $(7, 7, 4, 2)$ (for which $n=q=2$); 
\item[] (d) $(3a,a,a-1,3a-6)$ (where $a \geq 3$);\quad (e) $(6,4,2,1)$ (for which $a=2$ and $s=1$). 
\end{itemize}

In case (a), $G_B^B \cong G_B^{\Ga_{\BB}(B)}$ is 2-transitive of degree $4$, and in case (b), $\Ga[B, C] \cong 3 \cdot K_2, C_6$ or $K_{3,3}$ for adjacent $B, C \in \BB$. The properties for cases (a), (b) and (d) follow from Theorem \ref{thm:para} immediately. 

\medskip
\textbf{Case (c)}:~~In this case $\DD(B)$ is the biplane of order $2$. In other words, $\overline{\DD}(B)$ is isomorphic to the Fano plane $\PG(2,2)$. Since $G_B^B$ induces a group of automorphisms of the self-dual $\overline{\DD}(B)$, we have $G_B^B \le \Aut(\overline{\DD}(B)) \cong \PSL(3,2)$ and $G_B^{\Ga_{\BB}(B)} \le \Aut(\overline{\DD}(B))$. Since $G_B$ is 2-transitive on $\Ga_{\BB}(B)$ of degree 7, we have $|G_B^{\Ga_{\BB}(B)}| \ge 7 \cdot 6 = 42$. Since no proper subgroup of $\PSL(3,2)$ has order greater than 24, it follows that $G_B^{\Ga_{\BB}(B)} \cong \PSL(3,2)$. Since the actions of an automorphism group of $\PG(2,2)$ on the set of points and the set of lines are permutationally isomorphic, we have $G_B^B \cong \PSL(3,2)$. By Theorem \ref{thm:para}, $G$ is faithful on $\BB$.

We now prove $\Ga[B, C] \not \cong 2 \cdot C_{4}, C_{8}$, and if $\Ga[B,C] \cong 4 \cdot K_2$ then $\Ga$ is $(G, 2)$-arc transitive. Denote $A \cap \Ga(B)=\{u_1,u_2,u_3,u_4\}$ and $B \cap \Ga(A) = \{v_1,v_2,v_3,v_4\}$ for a fixed $A \in  \Ga_{\BB}(B)$.

Suppose $\Ga[A, B] \cong 2\cdot C_{4}$. Without loss of generality we may assume that each of $\{u_1, u_2, v_1,v_2\}$ and $\{u_3, v_3, v_3,v_4\}$ induces a copy of $C_4$ in $\Ga$. Since $\l=2$, $|B \cap \Ga(A) \cap \Ga(F)| = 2$ for each $F \in \Ga_{\BB}(B) \setminus \{A\}$. Since there are exactly six such blocks $F$, and since $|B \cap \Ga(A)| = 4$ and the multiplicity of $\DD(B)$ is one, each pair $\{v_i, v_j\}$ $(1 \le i < j \le 4$) is equal to exactly one $B \cap \Ga(A) \cap \Ga(F)$. So there exist $C,D \in \Ga_{\BB}(B) \setminus \{A\}$ such that $B \cap \Ga(A) \cap \Ga(C)=\{v_1,v_2\}$ and $B \cap \Ga(A) \cap \Ga(D)=\{v_1,v_3\}$. Since $\Ga_{\BB}$ is $(G, 2)$-arc transitive, there exists $g\in G$ such that $(A,B,C)^g=(A,B,D)$. Hence $(B \cap \Ga(A) \cap \Ga(C))^g = B \cap \Ga(A) \cap \Ga(D)$, that is, $\{v_1,v_2\}^g =\{v_1,v_3\}$. However, since $g \in G_{A, B}$, it permutes the two cycles of $\Ga[A,B]$ and so $\{v_1,v_2\}^g = \{v_1,v_2\}$ or $\{v_3,v_4\}$, which is a contradiction. 
 
Suppose $\Ga[A,B]  \cong C_{8}$. Without loss of generality we may assume that $\Ga[A,B]$ is the cycle $(v_1,u_1, v_2, u_2,v_3,u_3,v_4,u_4, v_1)$. As above there exists $C \in \Ga_{\BB}(B) \setminus \{A\}$ such that $B \cap \Ga(A) \cap \Ga(C)=\{v_1,v_2\}$. Since $r=4$, there exist distinct $D, F \in \Ga_{\BB}(B) \setminus \{A, C\}$ such that $v_1\in B \cap \Ga(D) \cap \Ga(F)$. Since $\l=2$, either $B \cap \Ga(A) \cap \Ga(D)$ or $B \cap \Ga(A) \cap \Ga(F)$ is equal to $\{v_1,v_3\}$, say, $B \cap \Ga(A) \cap \Ga(D) = \{v_1,v_3\}$. Since $\Ga_{\BB}$ is $(G, 2)$-arc transitive, there exists $g\in G$ such that $(A,B,C)^g=(A,B,D)$. Hence $\{v_1,v_2\}^g =\{v_1,v_3\}$. However, $g \in G_{A, B}$ induces an automorphism of $\Ga[A,B]$. On the other hand, the distances from $v_1$ to $v_2$ and $v_3$ in $\Ga[A,B]$ are 2 and 4, respectively, and this is contradiction. 

So far we have proved that $\Ga[A, B] \not \cong 2\cdot C_{4}, C_8$. Since $k = 4$ and $\Ga[B,C]$ is $G_{B, C}$-edge transitive, we must have $\Ga[B,C] \cong 4 \cdot K_2, K_{4,4} - 4\cdot K_2$ or $K_{4,4}$. Suppose $\Ga[B,C]\cong 4 \cdot K_2$. Then for $\a \in B$ the action of $G_{\a}$ on $\Ga(\a)$ and $\Ga_{\BB}(\a)$ are permutationally isomorphic. Note that $\Ga_{\BB}(\a)$ is a block of $\DD^*(B) \cong \DD(B)$. Since $G_{B}^{B} \cong G_{B}^{\Ga_{\BB}(B)} \cong \PSL(2,7) \cong \Aut(\DD^*(B))$, the setwise stabilizer of $\Ga_{\BB}(\a)$ in $G_{B}^{\Ga_{\BB}(B)}$ is isomorphic to $S_4$ and hence is 2-transitive on $\Ga_{\BB}(\a)$ as $|\Ga_{\BB}(\a)| = 4$. One can verify that this stabilizer is equal to $G_{\a}$. Thus $G_{\a}$ is 2-transitive on $\Ga_{\BB}(\a)$ and so 2-transitive on $\Ga(\a)$. In other words, $\Ga$ is $(G, 2)$-arc transitive when $\Ga[B,C] \cong 4 \cdot K_2$. In the case where $\Ga[B,C] \cong K_{4,4} - 4\cdot K_2$ or $K_{4,4}$, since $\Ga_{\BB}$ is connected and $\overline{\DD}(B) \cong \PG(2,2)$, one can easily see that $\Ga$ is connected of valency 12 or 16 respectively.

\medskip
\textbf{Case (e)}:~~Since $(v,b,r,\lambda) = (6,4,2,1)$, $\DD^*(B)$ is the 2-$(4,2,1)$ design, that is, the complete graph on four vertices. This case coincides with the case $(v,k)=(6,3)$ in \cite[Theorem 4.1(b)]{jlw} and we have $G_B^{\Ga_{\BB}(B)} \cong A_4$ or $S_4$ since $G_B$ is 2-transitive on $\Ga_{\BB}(B)$ of degree 4. Since $(\l, r) = (1, 2)$, by \cite[Theorem 4.3]{lz} we have $\Ga \cong \Ga_2(\Ga_{\BB}, \De)$ for some self-paired $G$-orbit $\De$ on 3-arcs of $\Ga_{\BB}$. Moreover, by \cite[Theorem 4.3]{lz} again, for any connected 4-valent $(G, 2)$-arc transitive graph $\Si$ and any self-paired $G$-orbit $\De$ on 3-arcs of $\Si$, $\Ga = \Ga_2(\Si, \De)$ is a $G$-symmetric graph admitting $\BB_2 =\{B_2(\s): \s \in V(\Si)\}$ as a $G$-invariant partition such that $\Ga_{\BB_2} \cong \Si$ and the corresponding parameters are $(v,b,r,\lambda) = (6,4,2,1)$ and $k=v-3=3$, where $B_2(\s)$ is the set of 2-paths of $\Si$ with middle vertex $\s$. Since $\Si$ is $(G, 2)$-arc transitive with even valency, by \cite[Remark 4(c)]{lpz} such a $\De$ exists and hence $\Si$ can occur as $\Ga_{\BB}$ in (e).  
  
\textit{Sufficiency}~~  
We now prove that each of (a)--(e) implies that  $\Ga_{\BB}$ is $(G, 2)$-arc transitive. Since by Lemma \ref{le1} the multiplicity $m$ of $\DD(B)$ is a common divisor of $b$ and $r$, in cases (a)--(d) we have $m = 1$. In case (e), since $b=4$ and $\l \ge 1$, we have $m=1$ as well.  

In case (a), since $(v, b, r, \l) = (4, 4, 1, 0)$ and $k=1$, each vertex in $B$ has a neighbour in a unique block of $\Ga_{\BB}(B)$, yielding a bijection from $B$ to $\Ga_{\BB}(B)$. Using this bijection, one can see that the actions of $G_B$ on $B$ and $\Ga_{\BB}(B)$ are permutationally isomorphic. Since $G_B^B \cong A_4$ or $S_4$, $G_B$ is 2-transitive on $\Ga_{\BB}(B)$ and therefore $\Ga_{\BB}$ is $(G, 2)$-arc transitive. 

In case (b), since $\Ga_{\BB}$ is a cycle and is $G$-symmetric, it must be $(G, 2)$-arc transitive. 

In case (c), since $(v,b,r,\lambda) = (7,7,4,2)$, $\overline{\DD}(B) \cong \PG(2,2)$. Since $G_B^B \cong \PSL(3,2)$ and the actions of $\PSL(3,2)$ on the set of points and the set of lines of $\PG(2,2)$ are permutationally isomorphic, we have $G_B^{\Ga_{\BB}(B)} \cong \PSL(3,2)$. Since $\PSL(3,2)$ is 2-transitive on the set of lines of $\PG(2,2)$, $G_B$ is 2-transitive on $\Ga_{\BB}(B)$ and so $\Ga_{\BB}$ is $(G, 2)$-arc transitive.   
 
As shown in Remark \ref{rem:e}(2), in case (d), $\Ga_{\BB}$ is $(G,2)$-arc transitive.

In case (e), since $G_B^{\Ga_{\BB}(B)} \cong A_4$ or $S_4$ and $b=4$, $G_B$ is 2-transitive on $\Ga_{\BB}(B)$ and so $\Ga_{\BB}$ is $(G,2)$-arc transitive. 
\qed  
\end{proof}  

The following result about the case $p=5$ is largely a corollary of Theorem \ref{thm:para} (and Remark \ref{rem:e}(2)). So we omit its proof. 
  
\begin{corollary} 
\label{thm:main5}
Let $\Ga$ be a $G$-symmetric graph with $V(\Ga)$ admitting a nontrivial $G$-invariant partition $\BB$ such that $k=v-5 \geq 1$ and $\Ga_{\BB}$ is connected of valency $b \geq 2$, where $G \le \Aut(\Ga)$.  Then $\Ga_{\BB}$ is $(G, 2)$-arc transitive if and only if one of the following holds: 
\begin{itemize}
\item[\rm (a)] $(v,b,r,\lambda)=(6,6,1,0)$  and $G_B^B \cong G_B^{\Ga_{\BB}(B)} \cong A_6$ or $S_6$; 
\item[\rm (b)] $(v,b,r,\lambda)=(10, 2,1,0)$, $\Ga_{\BB} \cong C_n$ and $G/G_{(\BB)} = D_{2n}$, where $n = |V(\Ga)|/10$;
\item[\rm (c)] $(v,b,r,\lambda)=(21,21,16,12)$, $\overline{\DD^*}(B) \cong \PG(2, 4)$, $G_B^{B}  \cong G_B^{\Ga_{\BB}(B)}$ is isomorphic to a 2-transitive subgroup of $\PGammaL(3,4)$, and $G$ is faithful on $\BB$;
\item[\rm (d)] $(v,b,r,\lambda)=(11,11,6,3)$, $\overline{\DD^*}(B)$ is isomorphic to the unique $2$-$(11, 5, 2)$ design and $G_B^{B} \cong G_B^{\Ga_{\BB}(B)} \cong \PSL(2,11)$;
\item[\rm (e)] $(v,b,r,\lambda)=(5a,a,a-1,5a-10)$ for some integer $a \geq 3$;
\item[\rm (f)] either (1) $(v,b,r,\lambda)=(10,6,3,2)$, $\DD^*(B)$ is isomorphic to the unique $2$-$(6,3,2)$ design, and $G_B^{\Ga_{\BB}(B)} \cong \Sp_{4}(2)$ or $\PSL(2,5)$; or (2) $(v,b,r,\lambda)=(15,6,4,6)$, $\DD^*(B)$ is isomorphic to the complementary design of $K_6$ and $G_B^{\Ga_{\BB}(B)} \cong A_6$; or (3) $(v,b,r,\lambda)=(20,16,12,11)$, $\overline{\DD^*}(B) \cong \AG(2,4)$ and $G_B^{\Ga_{\BB}(B)}$ is isomorphic to a 2-transitive subgroup of $\AGammaL(2,4)$.  
\end{itemize}
\end{corollary}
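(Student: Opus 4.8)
The plan is to derive the corollary by specialising Theorem~\ref{thm:para} to $p=5$ for the necessity direction, and to prove sufficiency by verifying, in each listed case, that $G_B$ acts $2$-transitively on $\Ga_{\BB}(B)$; for the connected $G$-symmetric graph $\Ga_{\BB}$ this is equivalent to $(G,2)$-arc transitivity. So first I would assume $\Ga_{\BB}$ is $(G,2)$-arc transitive and apply Theorem~\ref{thm:para} with $p=5$, landing in one of its cases (a)--(f). Cases (b), (d) and (e) pass directly to parts (b), (d), (e) of the corollary (with $a\ge 3$ in (e)), and case (a) gives $(v,b,r,\l)=(6,6,1,0)$ with $G_B^B\cong G_B^{\Ga_{\BB}(B)}$ a $2$-transitive group of degree $6$, as recorded in part (a). For case (c) I would solve $5=\frac{q^{n}-1}{q-1}$ over prime powers $q$ and integers $n\ge 2$: the only solution is $q=4$, $n=2$, which forces $a=q=4$ and $\overline{\DD^*}(B)\cong\PG(2,4)$ with $(v,b,r,\l)=(21,21,16,12)$; the embedding $G_B^{\Ga_{\BB}(B)}\le\PGammaL(3,4)$ and the faithfulness of $G$ on $\BB$ then follow exactly as in Theorem~\ref{thm:para}(c), giving part (c).

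The real content is case (f). Feeding $p=5$ into the row-(f) constraints of Table~\ref{tab0} (namely $a\ge 2$, $a\mid ps+1$, $s\mid\frac{ps-a+1}{a}$ and $\frac{a-1}{p-a}\le s\le a-1\le p-2$), the bound $a-1\le 3$ forces $a\in\{2,3,4\}$, and a short check of the divisibility conditions leaves exactly the three pairs $(a,s)=(2,1),(3,1),(4,3)$, producing the quadruples $(10,6,3,2)$, $(15,6,4,6)$ and $(20,16,12,11)$ of parts (f)(1)--(3). The two pairs with $s=1$ are precisely the $p=5$ entries of Table~\ref{tab1}: for $(2,1)$ the design $\DD^*(B)$ is the unique $2$-$(6,3,2)$ design, and for $(3,1)$ it is the complement of $K_6$ (equivalently $\overline{\DD^*}(B)\cong K_6$), and the admissible groups $G_B^{\Ga_{\BB}(B)}$ recorded there give parts (f)(1) and (f)(2). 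Pinning these groups down is a matter of deciding which $2$-transitive groups of degree $6$ (among $A_6$, $S_6\cong\Sp_4(2)$, $\PGL(2,5)$, $\PSL(2,5)$) preserve the relevant design and act transitively on its flags, using the classification of $2$-transitive groups of degree $6$ and the essential uniqueness of the designs.

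The main obstacle is the pair $(4,3)$, because $s=3$ is not covered by the tables of Theorem~\ref{thm:para} (which tabulate only $s=1,2$) and Remark~\ref{rem:e}(3) flags that the group and design are undetermined for general $s$. To handle it I would pass to the complementary structure: with $v=20$, $k=v-p=15$ and $\l=11$ one computes $\overline{\l}=v-2k+\l=1$ and block size $b-r=4$, so $\overline{\DD^*}(B)$ is a $2$-$(16,4,1)$ design, that is, an affine plane of order $4$. Since the affine plane of order $4$ is unique, $\overline{\DD^*}(B)\cong\AG(2,4)$; hence $G_B^{\Ga_{\BB}(B)}\le\Aut(\AG(2,4))=\AGammaL(2,4)$, and being $2$-transitive on the $16$ points it is a $2$-transitive subgroup of $\AGammaL(2,4)$, which is exactly part (f)(3).

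Finally, for sufficiency I would argue uniformly. In each of (a)--(d) and (f) the stated hypothesis makes $G_B^{\Ga_{\BB}(B)}$ a $2$-transitive group on the $b$-element set $\Ga_{\BB}(B)$, so $G_B$ is $2$-transitive on $\Ga_{\BB}(B)$ and $\Ga_{\BB}$ is $(G,2)$-arc transitive; in (a) this uses the $G_B$-equivariant bijection $B\to\Ga_{\BB}(B)$ coming from $k=1$, and in (b) it is automatic since $\Ga_{\BB}\cong C_n$ is a symmetric cycle. Case (e) is handled by Remark~\ref{rem:e}(2), which shows that the parameter condition $(5a,a,a-1,5a-10)$ already forces $(G,2)$-arc transitivity. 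Matching each case to the corresponding part of the corollary then finishes the proof.
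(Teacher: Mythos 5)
Your proposal is correct and follows essentially the same route as the paper: the authors state that the corollary is ``largely a corollary of Theorem~\ref{thm:para} (and Remark~\ref{rem:e}(2))'' and their accompanying commentary does exactly what you do --- specialise the tables to $p=5$, check that $(a,s)=(2,1),(3,1),(4,3)$ are the only admissible pairs in case (f), and identify $\overline{\DD^*}(B)$ in (f)(3) as the unique $2$-$(16,4,1)$ design $\AG(2,4)$. The only point where you are lighter than the paper is the elimination of $\PGL(2,5)$ in (f)(1) and (f)(2), which the authors pin down concretely (no transitive action of degree $15$; sharp $3$-transitivity is incompatible with preserving the $10$ blocks of the $2$-$(6,3,2)$ design), but your stated criterion of flag-transitive preservation of the design is the same argument.
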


As in Theorem \ref{thm:para}, in (a) above we have $\Ga \cong (|V(\Ga)|/2) \cdot K_2$ and every connected 6-valent 2-arc transitive graph can occur as $\Ga_{\BB}$ in (a). In (b), since $\Ga[B, C]$ is $G_{B,C}$-edge transitive, we have $\Ga \cong 5n \cdot K_2$, $n \cdot C_{10}$, $n \cdot (K_{5,5}-C_{10})$, $n \cdot (K_{5,5}- 5 \cdot K_2)$ or $n \cdot K_{5, 5}$. In (e) above, the same statements as in case (e) of Theorem \ref{thm:para} hold with $p=5$. The three cases in (f) arise because $(a, s) = (2, 1), (3, 1), (4, 3)$ are the only pairs satisfying the conditions in (f) of Theorem \ref{thm:para}. In (2) of (f), $G_B^{\Ga_{\BB}(B)}$ cannot be $\PGL(2,5)$ since the latter has no transitive action of degree 15. Similarly, in (1) of (f), $G_B^{\Ga_{\BB}(B)} \not \cong \PGL(2,5)$ because the $2$-$(6,3,2)$ design has 10 blocks of size 3 and $\PGL(2,5)$ is (sharply) 3-transitive of degree 6. The result in (3) of (f) follows because $\overline{\DD^*}(B)$ is a $2$-$(16,4,1)$ design in this case and $\AG(4,2)$ is the unique $2$-$(16,4,1)$ design \cite[Section 1.3]{CD}.

\delete
{ 
\begin{proof}  
\textit{Necessity}~~Suppose $\Ga_{\BB}$ is $(G, 2)$-arc transitive. Since $p=5$, by Theorem \ref{thm:para}, $(v,b,r,\lambda)$ is one of the following: (a) $(6,6,1,0)$; (b) $(10,2,1,0)$; (c) $(21,21,16,12)$ (for which $n=2, q=4$); (d) $(11,11,6,3)$; (e) $(5a,a,a-1,5a-10)$ (where $a \geq 3$ is an integer); (f) $(10,6,3,2)$, $(15,6,4,6)$ or $(20,16,12,11)$ (for which $(a, s) = (2, 1), (3, 1), (4, 3)$). 

In case (a), $G_B^B \cong G_B^{\Ga_{\BB}(B)}$ is 2-transitive of degree $6$. In case (b), since $\Ga$ is symmetric,   $\Ga[B, C] \cong 5 \cdot K_2$, $C_{10}$, $K_{5,5}- C_{10}$, $K_{5,5}- 5 \cdot K_2$ or $K_{5, 5}$ for adjacent $B, C \in \BB$. So the properties for cases (a), (b), (c), (d) and (e) follow from Theorem \ref{thm:para} immediately.  
 
In case (f), if $\DD^*(B)$ is the $2$-$(6,4,6)$ design (which is the complementary design of $K_6$) or the unique $2$-$(6,3,2)$ design, then $G_B^{\Ga_{\BB}(B)}$ is isomorphic to $A_6$, $\Sp_{4}(2)$ or a 2-transitive subgroup of $\PGL(2,5)$. If $\DD^*(B)$ is the $2$-$(6,4,6)$ design, then $G_B^{\Ga_{\BB}(B)} \cong A_6$ from the proof of Theorem \ref{thm:para}. (In this case $G_B^{\Ga_{\BB}(B)}$ cannot be a 2-transitive subgroup of $\PGL(2,5)$ since the latter has no transitive action of degree 15.) If $\DD^*(B)$ is the unique $2$-$(6,3,2)$ design, then $G_B^{\Ga_{\BB}(B)} \cong \Sp_{4}(2)$ or $G_B^{\Ga_{\BB}(B)}$ is isomorphic to a 2-transitive subgroup of $\PGL(2,5)$. However, since this design has 10 blocks of size 3 and $\PGL(2,5)$ is (sharply) 3-transitive of degree 6, we have $G_B^{\Ga_{\BB}(B)} \not \cong \PGL(2,5)$. Thus, if $\DD^*(B)$ is the unique $2$-$(6,3,2)$ design, then $G_B^{\Ga_{\BB}(B)} \cong \Sp_{4}(2)$ or $\PSL(2,5)$. In the remaining case $\DD^*(B)$ is a $2$-$(16,12,11)$ design and so $\overline{\DD^*}(B)$ is the unique $2$-$(16,4,1)$ design which is $\AG(4,2)$. Hence $G_B^{\Ga_{\BB}(B)}$ is a 2-transitive subgroup of $\AGammaL(2,4)$.
 
\textit{Sufficiency}~~  
We now prove that each of (a)--(f) implies that  $\Ga_{\BB}$ is $(G, 2)$-arc transitive. Since by Lemma \ref{le1} the multiplicity $m$ of $\DD(B)$ is a common divisor of $b$ and $r$, in cases (a)--(e) we have $m = 1$. In case (f), since $b=6$ and $\l \ge 1$, we also have $m=1$.  

In case (a), since $G_B^B \cong A_6$ or $S_6$, $G_B$ is 2-transitive on $\Ga_{\BB}(B)$ and therefore $\Ga_{\BB}$ is $(G, 2)$-arc transitive. 

In case (b), since $\Ga_{\BB}$ is a cycle and is $G$-symmetric, it must be $(G, 2)$-arc transitive. 

In case (c), since $\overline{\DD^*}(B)$ is isomorphic to $\PG(2, 4)$ and $G_B^{\Ga_{\BB}(B)}$ is isomorphic to a 2-transitive subgroup of $\Aut(\PG(2, 4))  \cong \PGammaL(3,4)$,  $\Ga_{\BB}$ is $(G, 2)$-arc transitive.   
  
In case (d), $G_B^{\Ga_{\BB}(B)} \cong PSL(2,11)$ is 2-transitive on the points $\overline{\DD^*}(B)$, and hence 2-transitive on $\Ga_{\BB}(B)$.
 
In case (e), by the same argument as in the proof of Theorem \ref{thm:main} we can prove that $\Ga_{\BB}$ is $(G, 2)$-arc transitive.
 
In case (f), we can see that $G_B^{\Ga_{\BB}(B)}$ is 2-transitive on $\Ga_{\BB}(B)$. Hence $\Ga_{\BB}$ is $(G,2)$-arc transitive. 
\qed  
\end{proof}
}

\vskip 1pc 
\noindent {\bf Acknowledgements}~~We would like to thank Professor Cheryl E. Praeger for helpful discussions on Mathieu groups, Professor Yuqing Chen for providing the example in Remark \ref{rem:e}(3), and an anonymous referee for helpful comments. Guangjun Xu was supported by the MIFRS and SFS scholarships of the University of Melbourne. Sanming Zhou was supported by a Future Fellowship (FT110100629) of the Australian Research Council.
  
\small


\delete
{
\newpage

\noindent \appendix{\bf \large Proof of the statements for $s=2$ in (f) of Theorem \ref{thm:para} (for referees only)} 

\bigskip
  
Suppose $s=2$ in case (f). Then $\DD^*(B)$ is a $2$-$\left(2p+1, \frac{(2p+1)(a-1)}{a}, p(a-2) + \frac{2p-a+1}{2a}\right)$ design with $pa$ `blocks' such that each `point' is contained in exactly $r^* = p(a-1)$ `blocks'. Moreover, $\DD^*(B)$ admits $G_B$ as a group of automorphisms acting 2-transitively on the set $\Ga_{\BB}(B)$ of $2p+1$ `points'.  All 2-transitive groups are known and can be found in, for example, \cite[Section 7.7]{Dixon-Mortimer}.

First, note that $p\geq 3$ is prime,   $2p+1 \ne q^3 + 1, q^2 + 1$ for any prime power $q$, so,  $G_B^{\Ga_{\BB}(B)}$ cannot be a unitary, Suzuki or Ree group. Since $r < 2p+1$, $S_{2p+1}$ is $r$-transitive on $2p+1$ points (in its natural action) but on the other hand $\DD^*(B)$ has $pa < \choose{2p+1}{r}$ blocks. Hence $G_B^{\Ga_{\BB}(B)} \not \cong S_{2p+1}$. Similarly, as $A_{2p+1}$ is $(2p-1)$-transitive in its natural action, $G_B^{\Ga_{\BB}(B)} \not \cong A_{2p+1}$ unless $r = 2p-1$. However, if $r = 2p-1$, then $\DD^*(B)$ must have $pa = \choose{2p+1}{2p-1}$ blocks and hence $a=2p+1$, contradicting the condition $a \le  p-1$. Hence $G_B^{\Ga_{\BB}(B)} \not \cong A_{2p+1}$. 

If $G_B^{\Ga_{\BB}(B)}$ is affine, then $2p+1 = q^n$ for some prime power $q$ and integer $n \ge 1$. Since  $2p+1$ is odd, $q$ cannot be even.  If $q>3$ is odd, then $2p=q^n-1= (q-1)(q^{n-1} +q^{n-2}+\cdots +q+1)$. Since $p$ is prime, we must have $n=1$, that is, $2p+1=q>3$ is a prime. This contradicts the requirement that $a$ is a divisor of $2p+1$ and $2 \le a\le p-1$.  
Suppose $q=3$. Then $p=\frac{3^n-1}{2}$ and $n\geq3$ is odd. Since $a$ is a divisor of $2p+1$, we have $a=3^j$ for some $1 \le j \le n-1$. 
So $G_B^{\Ga_{\BB}(B)}$ is isomorphic to a 2-transitive subgroup of $\AGL(n, 3)$ and $(v,b,r,\l) = \left(\frac{(3^n-1)3^j}{2}, 3^n, 3^{n-j}(3^j-1), \frac{(3^n-1)(3^j-2)}{2}+ \frac{3^{n-j}-1}{2}\right)$.  
 
If $G_B^{\Ga_{\BB}(B)}$ is projective, then $2p+1 = \frac{q^n - 1}{q-1}$ for a prime power $q$ and an integer $n \ge 2$. This implies that $q=2$ and $p=2^{n-1} - 1$ is a Mersenne prime (and so $n-1 \ge 2$ is a prime). Hence $G_B^{\Ga_{\BB}(B)}$ is isomorphic to a 2-transitive subgroup of $\PGL(n,2)$. Moreover, 
$$
(v,b,r,\l) = \left(a (2^{n-1} - 1), 2^n -1,  \frac{(2^{n} - 1)(a-1)}{a}, (2^{n-1} - 1)(a - 2) + \frac{2^{n} - 1-a} {2a}\right),
$$
where $n \ge 4$ and $a$ is an odd divisor of $2^{n} - 1=2p+1$ satisfying $3 \le a \leq \frac{2p+1}{3}$.
  
Since $2p+1 \ne 2^{m-1}(2^m \pm 1)$, $G_B^{\Ga_{\BB}(B)} \not \cong \Sp_{2m}(2)$ for any $m \geq 2$.
 
By comparing the degree $2p+1$ of $G_B^{\Ga_{\BB}(B)}$ with that of the ten sporadic 2-transitive groups \cite[Section 7.7]{Dixon-Mortimer}, one can verify that among such groups only the following may be isomorphic to $G_B^{\Ga_{\BB}(B)}$: 
$\PSL(2,11)$ (degree $2p+1=11$);
$M_{11}$ (degree $2p+1=11$);
$A_7$      (degree $2p+1=15$);
$M_{23}$ (degree $2p+1=23$). 
Since $2\le a\le p-1$  is a divisor of $2p+1$, we have  $2p+1\ne 11, 23$. 
The only remaining case is $A_7$ with degree $2p+1=15$. In this case 
$a$ is 3 or 5, and so $(v,b,r,\l, r^*) = (21,15,10,9,14)$ or $(35,15,12,22, 28)$.
The former case cannot happen since by \cite[Section II.1.3]{CD} a 2-$(15,5,2)$ design (which is the complementary design of a  2-$(15,10,9)$ design) does not exist. Thus $(v,b,r,\l, r^*) =(35,15,12, 22, 28)$ and $\DD^*(B)$ is isomorphic to a 2-$(15,12,22)$ design with $35$ blocks which admits $A_7 \le \PSL(4,2)$ as a 2-point transitive group of automorphisms. So $\overline{\DD^*}(B)$ is isomorphic to a 2-$(15,3,1)$ design which admits $A_7 \le \PSL(4,2)$ as a 2-point transitive group of automorphisms. Since by \cite[Theorem XII.6.2]{bjl}, $\PG(3,2)$ is the only Steiner system admitting a 2-point transitive automorphism group $H$ and in this case $H \cong A_7$, it follows that $\overline{\DD^*}(B) \cong \PG(3,2)$ and $G_B^{\Ga_{\BB}(B)} \cong A_7$.
}


\begin{thebibliography}{99}

\bibitem{bjl}
T.~Beth, D.~Jungnickel and H.~Lenz, {\em Design Theory}, Cambridge University Press, Cambridge, second edition, 1999.

\bibitem{b}
N.~L.~Biggs, {\em Algebraic Graph Theory}, second edition, Cambridge Mathematical
Library, Cambridge University Press, Cambridge, 1993.

\bibitem{bh}
H.~J.~Broersma and C.~Hoede, Path graphs, {\em J.~Graph Theory}
{\bf 13} (1989), 427--444.

\bibitem{CD}
C.~J.~Colbourn and J.~H.~Dinitz, {\em Handbook of Combinatorial Designs}, second edition, CRC Press, Boca Raton, 2007.

\bibitem{Dixon-Mortimer}
J.~D.~Dixon and B.~Mortimer, {\em Permutation Groups},
Springer, New York, 1996.

\bibitem{gp1} 
A.~Gardiner and C.~E.~Praeger, A geometrical approach to imprimitive graphs, {\em  Proc. London Math. Soc.} (3) 71 (1995) 524--546.

\bibitem{mpz}
M.~A.~Iranmanesh, C.~E.~Praeger and S.~Zhou, Finite symmetric
graphs with two-arc transitive quotients, {\em J. Combin. Theory
(Ser. B)} {\bf 94} (2005), 79--99.

\bibitem{jlw}
B.~Jia, Z.~Lu, and G.~X.~Wang, A class of symmetric graphs with 2-arc transitive quotients, {\em J. Graph Theory} {\bf 65} (2010), 232--245.

\bibitem{k}
W.~M.~Kantor, Classification of $2$-transitive symmetric designs,
{\em Graphs Combin.} {\bf 1} (1985), no. 2, 165--166. 

\bibitem{lpz}
C.~H.~Li, C.~E.~Praeger and S.~Zhou, A class of finite symmetric
graphs with 2-arc transitive quotients, {\em Math. Proc. Cambridge
Phil. Soc.} {\bf 129} (2000), 19--34.

\bibitem{lpz2}
C.~H.~Li, C.E. Praeger and S. Zhou, Imprimitive symmetric graphs with cyclic blocks, {\em European J. Combin.}, {\bf 31} (2010), 362--367.

\bibitem{lz}
Z.~Lu and S.~Zhou, Finite symmetric graphs with $2$-arc transitive
quotients (II), {\em J. Graph Theory}, {\bf 56} (2007), 167--193.

\bibitem{xz}
G.~Xu and S.~Zhou, Solution to a question on a family of imprimitive symmetric graphs, {\em Bull. Aust. Math. Soc.} {\bf 82} (2010), 79--83.


\bibitem{zhou00c}
S.~Zhou, Constructing a class of symmetric graphs, {\em European
J. Combin.} {\bf 23} (2002), 741--760.

\bibitem{zhou2003}
S. Zhou, Symmetric graphs and flag graphs, {\em Monatsh. Math.} {\bf 139} (2003), 69--81.


\bibitem{zhou98}
S.~Zhou, Almost covers of $2$-arc transitive graphs, {\em
Combinatorica} {\bf 24} (2004), 731--745. [Erratum: {\bf 27}
(2007), 745--746.]

\bibitem{zhou2008}
S.~Zhou, On a class of finite symmetric graphs, {\em Europ. J. Combin.} 29 (2008), 630--640.
\end{thebibliography}
\end{document}